\documentclass[11pt]{article}
\usepackage{amsmath,amsfonts,amssymb,amsthm,fancyhdr,bm,mathtools,enumitem,bbm}
\usepackage[hyphens]{url}
\usepackage[hidelinks]{hyperref}
\usepackage{fullpage}
\usepackage[dvipsnames]{xcolor}
\usepackage[capitalize]{cleveref}
\usepackage[color=Purple!50!white,textsize=tiny]{todonotes}
\usepackage[numbers,sort]{natbib}
\usepackage{mathtools}
\usepackage{tikz,ifthen}
\usepackage{comment}
\usepackage{dsfont}

\usetikzlibrary{decorations.markings,arrows.meta}
\tikzset{vert/.style={draw, fill=black, circle, inner sep=2pt}}

\newtheorem{theorem}{Theorem}[section]
\newtheorem{lemma}[theorem]{Lemma}

\newtheorem{proposition}[theorem]{Proposition}

\newtheorem{conjecture}[theorem]{Conjecture}
\newtheorem{question}[theorem]{Question}

\newtheorem{observation}[theorem]{Observation}
\theoremstyle{definition}
\newtheorem{definition}[theorem]{Definition}

\makeatletter
\renewenvironment{proof}[1][\proofname]{%
  \par\pushQED{\qed}\normalfont\topsep6\p@\@plus6\p@\relax
  \trivlist\item[\hskip\labelsep\bfseries #1.]\ignorespaces
}{%
  \popQED\endtrivlist\@endpefalse
}
\makeatother

\crefname{equation}{equation}{equations}
\crefname{lemma}{Lemma}{Lemmas}
\crefname{proposition}{Proposition}{Propositions}
\crefname{claim}{Claim}{Claims}
\crefname{theorem}{Theorem}{Theorems}
\crefname{conjecture}{Conjecture}{Conjectures}
\crefname{figure}{Figure}{Figures}

\AddToHook{env/lemma/begin}{\crefalias{theorem}{lemma}}
\AddToHook{env/paradigm/begin}{\crefalias{theorem}{paradigm}}
\AddToHook{env/proposition/begin}{\crefalias{theorem}{proposition}}
\AddToHook{env/corollary/begin}{\crefalias{theorem}{corollary}}
\AddToHook{env/conjecture/begin}{\crefalias{theorem}{conjecture}}
\AddToHook{env/claim/begin}{\crefalias{theorem}{claim}}
\AddToHook{env/question/begin}{\crefalias{theorem}{question}} 
\AddToHook{env/definition/begin}{\crefalias{theorem}{definition}} 
\AddToHook{env/remark/begin}{\crefalias{theorem}{remark}} 

\newlist{lemenum}{enumerate}{1}
\setlist[lemenum]{label=(\alph*), ref=\thelemma(\alph*)}
\crefalias{lemenumi}{lemma}

\newcommand{\nameditem}[1]{%
  \item \textnormal{(#1)\textbf{.}}\enspace
}

\newcommand\ab[1]{\lvert#1\rvert}

\let\leq\leqslant
\let\geq\geqslant

\title{The critical probability for percolation on finite graphs}
\author{Micha Christoph\thanks{Department of Mathematics, ETH Z\"{u}rich, 8092 Z\"urich, Switzerland. Email: \texttt{micha.christoph@math.ethz.ch}. Research supported by SNSF Ambizione Grant No. 216071.} \and Patryk Morawski\thanks{Department of Mathematics, ETH Z\"{u}rich, 8092 Z\"urich, Switzerland. Email: \texttt{patryk.morawski@math.ethz.ch}.} \and Yuval Wigderson\thanks{Institute for Theoretical Studies, ETH Z\"urich, 8006 Z\"urich, Switzerland. Email: \texttt{yuval.wigderson@eth-its. ethz.ch}. Research supported by Dr.\ Max R\"ossler, the Walter Haefner Foundation, and the ETH Z\"urich Foundation.}}
\date{}

\begin{document}

\maketitle

\begin{abstract}
    We determine the critical probability for Bernoulli bond percolation on essentially any finite graph. Namely, letting $\lambda(G)$ denote the spectral radius (maximum eigenvalue) of $G$, we prove that the critical probability is at $1/\lambda(G)$: above this probability there is typically a component of order $\Omega(\lambda(G))$, whereas below it all components are of order at most $O(\sqrt{\ab G})$. These results in particular confirm a conjecture of Krivelevich and Samotij about percolation on graphs of a given average degree, and vastly extend theorems of Bollob\'as, Borgs, Chayes, and Riordan, who proved analogous results but only for dense graphs. Our theorems are optimal in many regimes, and also demonstrate that percolation has an unexpectedly subtle behaviour on graphs whose spectral radius is roughly the square root of their maximum degree.
\end{abstract}

\section{Introduction}
Let $G$ be a graph, let $p \in [0,1]$ and consider a random subgraph $G_p$ of $G$ obtained by retaining each edge of $G$ independently with probability $p$.
This simple definition gives rise to the vast field of (bond) percolation theory, which studies the typical structure of the random graph $G_p$.
The study of percolation was initiated by Broadbent and Hammersley \cite{MR91567} in 1957, and originally focused primarily on infinite lattice graphs such as $\mathbb{Z}^d$.
The field was later dramatically transformed by the seminal paper of Benjamini and Schramm \cite{MR1423907}, who proposed studying a much more general model, in which $G$ is any infinite transitive graph.
This more general perspective has propelled the field for the past few decades, and has led to a huge number of advances and breakthroughs (e.g. \cite{MR4181032,MR4972123,MR990777,MR1169017,MR1423907}); we refer to \cite[Chapter~7]{MR3616205} for an overview of some of these developments.

A large part of percolation theory is dedicated to understanding the size and structure of the connected components of $G_p$.
For example, a fundamental observation is that if $G$ is any infinite connected graph then we can observe the following \emph{phase transition} at some \emph{critical probability} $p_c(G)$ --- if $p < p_c(G)$ then almost surely $G_p$ does not contain an infinite component, whereas if $p > p_c(G)$ then almost surely $G_p$ contains such an infinite component\footnote{This follows immediately from Kolmogorov's zero-one law, since the event that $G_p$ contains an infinite component is a tail event and its probability is monotone in $p$.}.
Many of the central questions of percolation theory revolve around understanding the value of $p_c(G)$ (as a function of $G$) and the typical structure of $G_p$ for $p$ close to $p_c(G)$.

At nearly the same time that Broadbent and Hammersley were introducing percolation, essentially the same concept was introduced to a different community under a different name.
In 1959, Gilbert~\cite{MR108839} and Erd\H{o}s and R\'enyi~\cite{MR148055,MR120167} introduced the study of random graphs.
Although the original model of Erd\H{o}s and R\'enyi was slightly different, the model that now usually bears their names is the binomial random graph $G(n,p)$, which is precisely the percolated graph $(K_n)_p$, where $K_n$ is the complete graph on $n$ vertices.
One of the most important discoveries made by Erd\H{o}s and R\'enyi is that $G(n, p)$ also undergoes a kind of phase transition: For any fixed $\varepsilon > 0$, if $p \leq (1-\varepsilon)/n$, then every component of $G(n,p)$ has order $O_{\varepsilon}(\log n)$ a.a.s.\footnote{We say that an event holds \emph{asymptotically almost surely (a.a.s.)} if its probability tends to $1$ as $n \to \infty$. When not clear from context, we will explicitly specify which parameter tends to infinity in such a.a.s.\ statements.}, whereas if $p \geq (1+\varepsilon)/n$, then $G(n, p)$ a.a.s. has a unique ``giant" component of order $\Omega_{\varepsilon}(n)$.
In other words, if one replaces the notion of finite vs.\ infinite components by ``microscopic'' vs.\ ``macroscopic'' components and the precise conditions $p < p_c$ and $p > p_c$ by the asymptotic $p \leq (1-\varepsilon)p_c$ and $p \geq (1+\varepsilon)p_c$, then one can observe a similar phenomenon of phase transition around $p_c = p_c(K_n) = 1/n$.

Given all of the above, it is very natural to study percolation on general finite graphs $G$.
Probably the first result in this direction is due to Ajtai, Koml\'os, and Szemer\'edi \cite{MR671140}, inspired by earlier work of Burtin \cite{MR505839} and Erd\H{o}s--Spencer \cite{MR534014}. 
They studied percolation in the hypercube graph $Q_d$, whose vertex set is $\{0,1\}^d$, and in which two vectors are adjacent if they differ in exactly one coordinate.
They proved that $Q_d$ also undergoes a phase transition around $p_c(Q_d) = 1/d$.
Namely, if $p < (1-\varepsilon)/d$, then a.a.s.\ every component of $(Q_d)_p$ is of order $O_{\varepsilon}(d)$, whereas if $p > (1+\varepsilon)/d$, then a.a.s.\ $(Q_d)_p$ contains a component of order $\Omega_{\varepsilon}(n)$, where $n = |Q_d| = 2^d$.
The technique introduced by Ajtai, Koml\'os and Szemer\'edi, combining a local analysis based on branching processes with a global argument using the expansion properties of $Q_d$, has further proved extremely influential.

Given the two positive examples, it is natural to ask whether one can show a similar phenomenon for all graphs $G$, or at least for $d$-regular graphs, noting that both $K_n$ and $Q_d$ are regular.
The answer is a quick no --- for example, if we take $G$ to be a disjoint union of many copies of $K_{d+1}$, then clearly we will never see the supercritical phase with a component of size $\Omega(\ab G)$ in $G_p$.
There are two solutions to this apparently serious problem.

The first, and most well-studied, one is to impose some global ``robust connectivity" properties on $G$ (as used for example by Ajtai, Koml\'os and Szemer\'edi in the study of $Q_d$), which ensure that there is a supercritical phase.
The property one naturally imposes is some variant of expansion, expressed either as a spectral condition \cite{MR2020308}, a combinatorial isoperimetry condition \cite{MR4777865,2308.10267,MR2073175}, or a structural condition which roughly implies one of the former \cite{MR4827416,MR4730907,MR4998908,MR4665636,MR4801596}.

An alternative approach, if we really want to consider \emph{all} graphs, is to weaken our expectation for a phase transition.
Namely, the above example suggests it may be more natural to call a component ``giant'' when its size is on the order of $d$, rather than $n$, which is the approach we want to take in this paper.
In this direction, Krivelevich and Sudakov \cite[Theorem~4]{MR3085765} proved the supercritical statement for regular graphs; if $G$ is a $d$-regular graph and $p \geq (1+\varepsilon)/d$, then a.a.s.\ $G_p$ contains a connected component of order $\Omega_\varepsilon(d)$. 
% This is now also a standard consequence of the DFS method introduced in \cite{MR2946076}. 

To complement this with a subcritical statement, one might want to say that if $p \leq (1-\varepsilon)/d$, then a.a.s.\ every component of $G_p$ has order $o(d)$.
Unfortunately, the same example of a disjoint union of many cliques shows that this cannot hold in general.
If $d$ and $0<p \leq (1-\varepsilon)/d$ are fixed, and the number of copies of $K_{d+1}$ is taken sufficiently large, then with overwhelming probability there will be some $K_{d+1}$ all of whose edges survive into $G_p$.
In particular, $G_p$ will have a component of size $d+1$.
Nevertheless, it is easy to show that having too many vertices is basically the only obstruction; concretely, a simple branching process argument (see, e.g., \cite[Theorem~2]{MR4827416} or \cite[Proposition~1]{MR2583058}) shows that if $G$ is $d$-regular and $p \leq (1-\varepsilon)/d$, then a.a.s.\ the largest component of $G_p$ has order $O_{\varepsilon}(\log {|G|})$, and in particular $o(d)$ as long as $|G| = 2^{o(d)}$.

Combining the supercritical result of Krivelevich and Sudakov \cite[Theorem~4]{MR3085765} with the standard subcritical estimate, we obtain the following satisfactory picture for finite $d$-regular graphs.
\begin{theorem}\label{thm:regular graphs}
    Let $\varepsilon > 0$, and let $G$ be any finite $d$-regular graph.
    \begin{enumerate}[label={\rm \bf \Roman*}, ref=(\Roman*)]
        \nameditem{Supercritical phase} If $p\geq (1+\varepsilon)/d$, then $G_p$ a.a.s.\ has a component of order $\Omega_\varepsilon(d)$.
        \nameditem{Subcritical phase} If $p\leq (1-\varepsilon)/d$, then a.a.s.\ all components of $G_p$ have order $O_{\varepsilon}(\log {|G|})$.\label{it:d-regular subcritical}
    \end{enumerate}
\end{theorem}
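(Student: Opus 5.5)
The theorem has two independent halves, and I will prove them separately. For the supercritical part I will simply invoke the result of Krivelevich and Sudakov \cite[Theorem~4]{MR3085765}, as the text above indicates. If a self-contained argument is wanted, the natural route is the DFS method. Run depth-first search on $G$, revealing edges with fresh independent $\mathrm{Bernoulli}(p)$ coins. Suppose no component of order $\geq cd$ appears. Then the stack $U$ always has size $< cd$. Look at the moment when $|S| = \varepsilon' N$ vertices have been fully explored, with $N$ of order $d$ chosen so that the total number of revealed edges exceeds $(1+\varepsilon/2)N/p$ only with small probability. Every edge between $S$ and the still-unvisited set $T$ has been queried and found absent. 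By $d$-regularity, a set of size about $d/2$ sends at least roughly $d\cdot|S|-|S|^2$ edges out of itself, and this forces more than $N$ queries with few successes. That contradicts a Chernoff bound on the coins. Balancing $|S|$ against $d$ and $\varepsilon$ is the only delicate calculation, and it is standard.

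For the subcritical part I will use a branching-process (path-counting) bound. Fix a vertex $v$ and $k\ge 1$. If the component $C_v$ of $v$ in $G_p$ has at least $k$ vertices, then $G_p$ contains a tree on exactly $k$ vertices containing $v$, since any connected graph with at least $k$ vertices has a connected subgraph on $k$ vertices including $v$. Such a tree has $k-1$ edges, so it survives with probability $p^{k-1}$. The number of $k$-vertex subtrees of a $d$-regular graph containing $v$ is at most $(ed)^{k-1}$. This follows from the standard embedding count of rooted plane trees: there are at most $\binom{d(k-1)}{k-1}$ choices, or equivalently at most the number of subtrees of the infinite $d$-ary tree. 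So a union bound gives
\[
\Pr(|C_v|\ge k)\le (edp)^{k-1}\le (e(1-\varepsilon))^{k-1},
\]
which is useless because $e(1-\varepsilon) > 1$.

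This constant loss is the main obstacle. I will therefore replace the union bound with a domination argument. Explore $C_v$ by breadth-first search. Each newly explored vertex has at most $d$ unexplored neighbours, and each is joined independently with probability $p$. Hence $|C_v|$ is stochastically dominated by the total progeny of a Galton--Watson process with $\mathrm{Bin}(d,p)$ offspring, whose mean is $dp\le 1-\varepsilon$.

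The total progeny $Z$ of such a subcritical process has exponential tails, $\Pr(Z\ge k)\le e^{-c_\varepsilon k}$. One way to see this: $Z\ge k$ implies that the first $k$ offspring counts sum to at least $k-1$. That sum is $\mathrm{Bin}(dk,p)$ with mean at most $(1-\varepsilon)k$, and a Chernoff bound gives $e^{-c\varepsilon^2 k}$ for $k$ large, with $c_\varepsilon$ of order $\varepsilon^2$.

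Finally, I take $k = C_\varepsilon\log|G|$ with $C_\varepsilon$ large and apply a union bound over the $|G|$ vertices. This gives $\Pr(\exists v: |C_v|\ge k)\le |G|\,e^{-c_\varepsilon k}=o(1)$, which proves \ref{it:d-regular subcritical}.
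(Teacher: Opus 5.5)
Your proposal is correct and follows the same route as the paper, which also takes the supercritical part directly from Krivelevich--Sudakov \cite[Theorem~4]{MR3085765}, and gets the subcritical part by dominating $|\mathcal{C}_v|$ by the total progeny of a $\mathrm{Bin}(d,p)$ Galton--Watson process with mean $dp\le 1-\varepsilon$ and then taking a union bound over the $|G|$ vertices. Your optional DFS sketch has muddled parameters (e.g.\ ``$|S|=\varepsilon' N$ with $N$ of order $d$'' alongside ``$(1+\varepsilon/2)N/p$''), but since your supercritical part rests on the citation, this does not affect the proof.
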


\subsection{Main results: percolation on general finite graphs}
Our goal in the present paper is to extend the picture from \cref{thm:regular graphs} to arbitrary, not necessarily regular, finite graphs.
Our first main result yields the same supercritical behaviour for graphs of average degree $d$.

\begin{theorem}\label{thm:main avg degree}
    For every $\varepsilon > 0$ there exists $\gamma > 0$ such that the following holds.
    If $G$ is a graph of average degree $d(G) > 0$ and $p \geq (1+\varepsilon)/d(G)$, then
    \[
        \Pr[G_p \text{ has a component of order at least }\gamma d(G)] = 1 - o(1),
    \]
    where $o(1)$ tends to $0$ as $\ab{G} \to \infty$.
\end{theorem}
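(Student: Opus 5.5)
The plan is to prove the statement through the spectral radius, since $\lambda(G)\geq d(G)$ for every graph: take $\mathbf 1/\sqrt{|G|}$ in the Rayleigh quotient. It therefore suffices to show that $p\geq(1+\varepsilon)/\lambda(H)$ yields a component of order $\Omega_\varepsilon(\lambda(H))$ with probability bounded away from $0$, for suitable subgraphs $H\subseteq G$ with $\lambda(H)\geq(1-\varepsilon/3)d(G)$. We then boost this to probability $1-o(1)$ using many vertex-disjoint such subgraphs, which percolate independently.

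\textbf{Trivial regime.} First dispose of the case where $d(G)$ is bounded, say $\gamma d(G)<2$. Then we only need a single edge. The graph $G$ has $d(G)|G|/2\to\infty$ edges and $p\geq(1+\varepsilon)/d(G)$ is bounded below, so some edge survives a.a.s.

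\textbf{Constant-probability step.} For a fixed $H$ with Perron eigenvector $x$ (with $Ax=\lambda x$, $x\geq 0$), run a breadth-first exploration of the component of a vertex $v$ in $H_p$. Track the potential $S_t=\sum_{u\text{ active}}x_u$ rather than the number of active vertices. Exploring an active vertex $u$ reveals each unexplored neighbour $w$ with probability $p$, so the expected change in $S_t$ is
\[
p\sum_{w\sim u,\ w\text{ unexplored}}x_w-x_u \;\geq\; (p\lambda-1)x_u-p\sum_{w\sim u,\ w\text{ explored}}x_w .
\]
As long as the explored set has $x$-mass and size $o(\lambda)$ in the right sense, the drift is at least $(\varepsilon/2)x_u$. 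A supermartingale or optional stopping argument then shows that, started from a suitably chosen $v$, with probability $\Omega_\varepsilon(1)$ the process survives until roughly $\gamma\lambda$ vertices have been explored. Here $v$ should be chosen to be $x$-heavy, or $v$ should be drawn with probability proportional to $x_v^2$.

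\textbf{Main obstacle.} The hard step is controlling the depletion term and the variance of $S_t$ when $x$ is very unbalanced. The star-like example $K_{1,n}$ shows that the eigenvector can concentrate on a few high-degree vertices. There a single vertex contributes a jump of order $x_u$ that dominates $S_t$, and its neighbourhood is then used up quickly. My first attempt would be to truncate: remove from $H$ all vertices with degree above $C\lambda^2$, or with $x_u$ above a threshold. One then checks that $\lambda$ drops by at most a $(1-\varepsilon/10)$ factor, using that high-degree vertices carry Perron mass controlled by $\sum_w x_w\leq\sqrt{|N(u)|}$. The exploration is then analysed on the truncated graph, where jumps are bounded and a second-moment bound on $S_t$ is available.

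\textbf{Boosting.} To get probability $1-o(1)$, greedily extract vertex-disjoint subgraphs $H_1,H_2,\dots$, each with $\lambda(H_i)\geq(1-\varepsilon/3)d(G)$. We stop once the remaining graph has average degree below $(1-\varepsilon/3)d(G)$. By counting edges, either the number of pieces tends to infinity as $|G|\to\infty$, or some piece has $|H_i|\to\infty$. In the first case independence gives the result. In the second case we apply the argument to the large piece from many well-separated starting vertices. This last sub-case needs its own concentration argument, for example via Talagrand's or an edge-exposure martingale inequality for the number of vertices in large components.
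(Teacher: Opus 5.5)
\textbf{The gap is in the boosting step, which is where the real content of the theorem lies.} Your second sub-case (a bounded number of pieces, one of them huge) is not a corner case but the generic one. If $G$ is $d$-regular with second eigenvalue $\mu$, then every $H\subseteq G$ on $s\le |G|/2$ vertices has $\lambda(H)\le ds/|G|+\mu\le d/2+\mu$. So for a good expander with $d$ large, every admissible piece has more than half the vertices, there is only one piece, and you are back to proving the theorem for that graph.

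The remedies you suggest do not work in general:
\begin{itemize}
\item Your constant-probability estimate only applies to starting vertices where the Perron vector is close to its maximum, and there may be very few of these.
\item When $d$ is large the diameter is small, so there are no well-separated starts, and explorations from different vertices interact.
\item The number $X$ of vertices in components of order at least $\gamma d$ changes by up to $2\gamma d$ when a single edge is flipped. Bounded-difference or Talagrand-type bounds therefore only give fluctuations of order $\gamma d\sqrt{|G|}$. Since $\mathbb{E}X\le|G|$, this route fails for $d\gtrsim\sqrt{|G|}$ even for regular graphs. For irregular graphs you have no useful lower bound on $\mathbb{E}X$ at all.
\end{itemize}
The rule for extracting the pieces $H_i$ is also left unspecified.

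\textbf{The missing idea is to run the trials sequentially rather than on pre-chosen disjoint pieces.} Reveal the component of a maximiser of the Perron vector of $G-S_t$, and delete it if it is small. Conditioned on what has been revealed, $G_p-S_{t+1}$ is a fresh copy of $(G-S_{t+1})_p$. Since $\lambda$ is not stable under such deletions, the paper tracks the average degree instead and uses $\lambda(G-S_t)\ge d(G-S_t)$.

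The needed input is \cref{lemma:component_of_a_vertex}\ref{it:not too many edges}: a component of order at most $1/p$ whose degree sum is at least $k/p^2$ forces many ``no'' answers, which has probability at most $e^{-k/(8p)}$. So each failed round deletes fewer than $Td^2$ edges. This permits $T=\sqrt{\varepsilon|G|/(16d)}$ rounds while keeping $d(G-S_t)\ge(1-\varepsilon/8)d$, and $T\to\infty$ when $d\le|G|^{3/4}$. When $d\ge|G|^{3/4}$ one has $\lambda(G)\ge d\ge|G|^{1/4}\sqrt{\Delta(G)}$, so \cref{thm:main large lambda}\ref{it:large lambda supercritical} (via \cref{lemma:spectral_radius_stability}) applies instead.

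\textbf{The obstacle you flagged in the constant-probability step is a red herring, and your fix is faulty.} Truncating degrees above $C\lambda^2$ is vacuous, since $\Delta(H)\le\lambda(H)^2$ always. Thresholding $x$ can destroy $\lambda$ entirely: it deletes the small side of $K_{s,t}$. Your per-vertex drift bound is also false: in a star, the only neighbour of a leaf is the already explored centre, so exploring a leaf has drift $-x_u$.

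What works is to normalise $\|x\|_\infty=1$, start at a maximiser, and argue in aggregate. The total depletion over $S$ is $\sum_{y\in S}x_y\deg_S(y)\le|S|\,x(S)$ (\cref{lemma:small_sets_expand}). One exponential supermartingale with increments in $[0,1]$ (\cref{lemma:tail_bounds}) then gives $\Pr[|\mathcal C_v|\le\gamma\lambda]\le e^{-\varepsilon^2/18}$ for every graph, with no truncation needed.
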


\noindent Krivelevich and Samotij \cite{MR3177525} observed that taking $p\geq (c+\varepsilon)/d(G)$ for an explicit constant $c\approx1.6$ yields the same conclusion, and conjectured that the constant could be improved all the way to $1$. This question was later reiterated in \cite{2308.10267}, and the theorem above answers it affirmatively.
% \yuval{consider rewriting, KS technically ask for a cycle and not just a component}

The next natural question to ask is whether $1/d(G)$ is a critical probability for any finite graph $G$, as $1/d$ is for all $d$-regular graphs.
Our next result shows that the answer is no: the true value of $p_c(G)$ is smaller than $1/d(G)$ essentially whenever $G$ is ``far from regular''.
In order to state it, we need one more piece of terminology.

Given a graph $G$, we denote by $\lambda(G)$ its \emph{spectral radius}, that is, the largest eigenvalue of its adjacency matrix $A_G$.
This can be thought of as a certain ``smoothed'' version of the average degree of $G$, which takes into account both the degree sequence and the geometry of $G$.
For example, it is a standard exercise in spectral graph theory that $\lambda(G) \geq d(G)$, with equality if and only if $G$ is regular.

Our main theorem states that, informally speaking, $1 / \lambda(G)$ is the critical probability for any finite graph $G$, although its conclusions are slightly weaker than those in \cref{thm:regular graphs} for regular graphs.
We remark, however, that as we will shortly discuss, this reflects the reality and is not simply an artefact of our methods.
% \micha{If $\gamma'$ is big, replace it with big constant}
\begin{theorem}\label{thm:main lambda}
    For every $\varepsilon, \delta > 0$ there exist $\gamma, L > 0$ such that the following holds for any finite graph $G$.

    \begin{enumerate}[label={\rm \bf \Roman*}, ref=(\Roman*)]

        \nameditem{Supercritical phase}\label{it:lambda supercritical no assumptions}
        If $p \geq (1+\varepsilon)/\lambda(G)$ then
        \[
            \Pr[G_p \text{ has a component of order at least } \gamma \lambda(G)] \geq 1-\delta.
        \]

        \nameditem{Subcritical phase}\label{it:lambda subcritical no assumptions}
        If $p \leq (1-\varepsilon)/\lambda(G)$ then
        \[
            \Pr[G_p \text{ has a component of order at least } L\sqrt{\ab G}] \leq \delta.
        \]

    \end{enumerate}
\end{theorem}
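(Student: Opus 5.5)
The two phases need different tools: the subcritical bound follows from a walk-counting first-moment argument with the Perron eigenvector, while the supercritical bound is reduced to \cref{thm:main avg degree} applied to a suitable subgraph.

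\emph{Subcritical phase.} Let $A=A_G$, $\lambda=\lambda(G)$, and let $x$ be a unit Perron eigenvector, so $x\ge 0$ and $Ax=\lambda x$. We have $\|A^k\|_{\mathrm{op}}=\lambda^k$. If $v$ and $w$ lie in the same component of $G_p$, there is a self-avoiding path of some length $k$ from $v$ to $w$ all of whose edges are retained. Since the number of such paths is at most $(A^k)_{vw}$, we get
\[
\Pr[v\leftrightarrow w]\le \sum_k p^k (A^k)_{vw}.
\]
Let $Z=\sum_{v,w}\mathbf 1[v\leftrightarrow w]=\sum_C |C|^2$, the sum running over components $C$ of $G_p$. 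Then
\[
\mathbb E Z\le \sum_k p^k\,\mathbf 1^{T}A^k\mathbf 1\le \sum_k (p\lambda)^k\,\|\mathbf 1\|^2\le n/\varepsilon,
\]
where $n=|G|$, using $\mathbf 1^T A^k \mathbf 1\le \|A^k\|_{\mathrm{op}}\|\mathbf 1\|^2$. If some component has order at least $L\sqrt n$, then $Z\ge L^2 n$. Markov's inequality therefore gives probability at most $1/(\varepsilon L^2)\le\delta$ once $L\ge (\varepsilon\delta)^{-1/2}$. This part is routine.

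\emph{Supercritical phase.} The plan is to reduce to \cref{thm:main avg degree}, which gives a component of order $\gamma d(H)$ a.a.s.\ in $H_p$ for any graph $H$ with $p\ge(1+\varepsilon)/d(H)$. Since $H_p\subseteq G_p$, it suffices to find a subgraph $H\subseteq G$ with $d(H)\ge(1-\varepsilon/3)\lambda$. This is false in general: for a star, $\lambda=\sqrt{\Delta}$ while every subgraph has average degree less than $2$. So the reduction cannot always be to a subgraph of $G$ itself, and some other structure is needed.

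The natural replacement is to work with the Perron vector $x$ and explore from a high-weight vertex. Let $\varepsilon'=\varepsilon/2$, and run a multi-type branching-process comparison in which a vertex $u$ at generation $t$ carries weight $x_u$. The expected weight of the next generation is then
\[
\sum_w p A_{uw}x_w = p\lambda x_u \ge (1+\varepsilon)x_u,
\]
so the weighted exploration process is supercritical. One then has to show that it survives to total size $\gamma\lambda$ with probability bounded below. Two losses must be controlled: depletion, since vertices already explored are removed (this matters only once $\Theta(\lambda)$ vertices have been explored if weights are spread out), and variance, since a single vertex may carry most of the weight, as the star centre does.

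Handling the variance is where the star-like behaviour enters. There, one needs the two-step quantity $p^2\sum_w A_{uw}\deg(w)$, in effect working with $A^2$, whose spectral radius is $\lambda^2$. This is consistent with the abstract's remark about subtle behaviour at $\lambda\approx\sqrt{\Delta}$. Finally, a success probability bounded below must be boosted to $1-\delta$, and I would do this by running the exploration from many vertex-disjoint high-weight starting regions. I expect this concentration and boosting step, in graphs where the Perron vector is concentrated, to be the main obstacle. The first thing I would try is a dichotomy. Either some set of $O(1)$ vertices carries a constant fraction of $\|x\|^2$; in that case $G$ contains large stars or bipartite pieces, and the giant is found directly via $A^2$. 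Or $x$ is spread out; in that case a subgraph of average degree $(1-o(1))\lambda$ exists, obtained by weighting edges by $x_ux_w$ and a dyadic pigeonholing of the values of $x$, and \cref{thm:main avg degree} applies.
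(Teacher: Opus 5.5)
\textbf{Subcritical phase.} Your argument is correct and is the paper's argument: bound $\mathbb{E}\sum_C|C|^2$ by $\sum_k p^k\mathds{1}^TA_G^k\mathds{1}\le |G|/\varepsilon$ and apply Markov.

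\textbf{Supercritical phase.} Here there is a genuine gap. The only concrete step you propose for the hard case is false: a spread-out Perron vector does \emph{not} give a subgraph of average degree $(1-o(1))\lambda(G)$. Take $G=K_{s,t}$ with $s=t^{1/3}$. Then $\lambda(G)=\sqrt{st}=t^{2/3}$, and every vertex carries at most a $1/(2s)=o(1)$ fraction of $\|x\|_2^2$; indeed $\lambda(G)/\sqrt{\Delta(G)}=t^{1/6}\to\infty$. Yet every edge meets the $s$-side, so every subgraph has average degree at most $2s\ll\lambda(G)$. Dyadic pigeonholing of $x$ can at best produce an unbalanced pair of classes with one-sided degrees $D_1,D_2$ and $\sqrt{D_1D_2}\approx\lambda$. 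The average degree of such a pair is of order $\min(D_1,D_2)$, not $\sqrt{D_1D_2}$. Pigeonholing also loses constant factors, whereas you need precision $1-\varepsilon/3$. So \cref{thm:main avg degree} cannot drive the supercritical phase. Your boosting via ``many vertex-disjoint starting regions'' also has no mechanism behind it: after removing an explored component, nothing in your sketch prevents $\lambda$ from collapsing, as happens in $K_{1,n}$.

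\textbf{The paper's route.} The paper splits on $\lambda(G)$ versus $K\sqrt{\Delta(G)}$, with $K=K(\varepsilon,\delta)$ large and $\gamma=\min\{\gamma',K^{-3}\}$.

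If $\lambda(G)<K\sqrt{\Delta(G)}$, a maximum-degree vertex has expected $G_p$-degree at least $(1+\varepsilon)\lambda(G)/K^2$. By Chernoff, its star alone has order $\gamma\lambda(G)$ with probability at least $1-e^{-K/8}$, where one may assume $\lambda(G)\ge K^3$. Your ``concentrated'' branch reduces to this case: by Cauchy--Schwarz, $x_v^2\ge c\|x\|_2^2$ forces $d(v)\ge c\lambda^2$, so no $A^2$ analysis is needed.

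If $\lambda(G)\ge K\sqrt{\Delta(G)}$, the paper invokes \cref{thm:main large lambda}\ref{it:large lambda supercritical}. One explores from a vertex of weight $1$, using the Perron vector $w$ with $\|w\|_\infty=1$ as weights. The supermartingale bound (\cref{lemma:tail_bounds}) together with weight expansion of sets of size at most $\gamma\lambda$ (\cref{lemma:small_sets_expand}) gives a component of order $\gamma\lambda$ with constant probability. Otherwise the component has weight $O(\sqrt\varepsilon K)$, except with probability $e^{-\Omega_\varepsilon(K)}$.

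The key ingredient you are missing is \cref{lemma:spectral_radius_stability}. Since $w^Tw\ge\lambda^2/\Delta\ge K^2$, deleting such a component lowers $\lambda$ only by a factor $1-O(\sqrt\varepsilon/K)$. Because $\Delta$ can only drop, the hypothesis $\lambda\ge (K/2)\sqrt{\Delta}$ persists. This permits $\Theta(\sqrt\varepsilon K)$ fresh explorations in the unexposed graph $G-\mathbf S_t$, giving failure probability $e^{-cK}\le\delta$.
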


The fact that the spectral radius of $G$ is related to the percolation on $G$ is, at this point, somewhat to be expected.
For example, it is known that the spectral radius of the mean progeny matrix determines the survival probability of multi-type branching processes (see, e.g., \cite[Chapter~V]{MR373040}).
Even more closely related to our work is a remarkable theorem of Bollob\'as, Borgs, Chayes, and Riordan~\cite{MR2599196}, which shows that the critical probability for graphs $G$ whose average degree is linear in the number of vertices is $1/\lambda(G)$.
Later, Chung, Horn, and Lu~\cite{chunghorn} proved related results, which also show a phase transition around $1/\lambda(G)$ for a restricted class of sparser graphs $G$.\footnote{Their results are stated in terms of the second order average degree, which, however, asymptotically coincides with $\lambda(G)$ under their assumptions on the graph.}

As mentioned above, \cref{thm:main lambda} gives a slightly weaker picture than the one in \cref{thm:regular graphs}.
Firstly, in the supercritical regime, we only get a component of size $\Omega(\lambda(G))$ with probability arbitrarily close to one, but not tending to one.
This in particular makes \cref{thm:main lambda} incomparable\footnote{Recall that $\lambda(G)\geq d(G)$ for any graph $G$, hence \cref{thm:main lambda} is ``morally'' stronger than \cref{thm:main avg degree}: we are percolating at a lower value of $p$, and obtaining a stronger lower bound on the size of the component, but at the expense of a weaker estimate on the probability of this event.} to \cref{thm:main avg degree}.
Secondly, in the subcritical regime, the largest component has size $O(\sqrt{\ab{G}})$, not $O(\log {\ab{G}})$ as for regular graphs.
Quite surprisingly, both of these drawbacks are necessary, as we now discuss.

The graph showing that weakening both the supercritical and the subcritical statements in \cref{thm:main lambda} is necessary is the complete bipartite graph $K_{s, t}$.
Here, we want to think of $s$ as constant chosen sufficiently large compared to $\gamma$, and of $t$ as tending to infinity.
To explain this example, let $S$ and $T$ be the corresponding sides in the graph $K_{s,t}$ and, since it is well known that $\lambda(K_{s,t}) = \sqrt{st}$, let $p = C/\sqrt{st}$ for some constant $C > 0$ (we will think of $C\leq 1-\varepsilon$ for analysing the subcritical case, and $C\geq 1+\varepsilon$ for the supercritical case).
By the Chernoff and union bounds, it is easy to see that a.a.s.\ as $t \to \infty$ in $(K_{s,t})_p$ each vertex in $S$ will have $(1+o(1))C\sqrt{t/s}$ neighbours in $T$ (recall that we think of $s$ as fixed).
On the other hand, with probability $\exp(-\Theta(C^2s))$, which is a constant, each vertex in $T$ will have at most one neighbour in $S$.
In particular, with small but constant probability, the largest component of $(K_{s,t})_p$ will have size $(1+o(1))C\sqrt{t/s}$.
If we pick $s$ to be a large enough constant compared to $C$ and $\gamma$, then this will be less than $\gamma \lambda(G)$, which shows the optimality of the supercritical statement in \cref{thm:main lambda}: we truly cannot obtain a probability tending to $1$, but only arbitrarily close to $1$.
For the subcritical statement, we notice that with probability at least $\Omega((C^2/s)^s)$ all the vertices in $S$ will lie in the same connected component of $(K_{s,t})_p$.
Indeed, we can fix an arbitrary tree $F$ on the vertex set $S$, and notice that for every $xy \in E(F)$ the probability that in the percolated graph $(K_{s, t})_p$ the vertices $x$ and $y$ have a common neighbour in $T$ is at least roughly $C^2/s$. It follows by the Harris inequality that all the vertices in $S$ lie in the same component with probability at least $\Omega((C^2/s)^s)$.
In particular, again with a small but constant probability, the largest component of $(K_{s, t})_p$ will have size at least $s \cdot (1+o(1))C\sqrt{t/s}$.
If we again set $s$ large enough compared to $C$ and $L$, then this will be more than $L\sqrt{\ab{G}}$.

It turns out that what goes wrong in this example is that the spectral radius of $K_{s, t}$ is roughly the square root of its maximum degree.
In fact, our final result shows that this is basically the only obstruction to obtaining a sharper picture for general graphs.
\begin{theorem}\label{thm:main large lambda}
    For every $\varepsilon > 0$ there exist $\gamma, L, c >0$ such that the following holds. 
    Let $K > 0$ and let $G$ be a finite graph with $\lambda(G)\geq  K\sqrt{\Delta(G)}$.
    Then,
    \begin{enumerate}[label={\rm \bf \Roman*}, ref=(\Roman*)]
            \nameditem{Supercritical phase}\label{it:large lambda supercritical}
            If $p \geq (1+\varepsilon)/\lambda(G)$ then
            \[
                \Pr[G_p \text{ has a component of order at least } \gamma \lambda(G)] \geq 1-\exp(-cK).
            \]

            \nameditem{Subcritical phase}\label{it:large lambda subcritical}
            If $p \leq (1-\varepsilon)/\lambda(G)$ then as $\ab G \to \infty $ we have
            \[
                \Pr\left[G_p \text{ has a component of order at least } \frac{L\sqrt{\ab G} \log {\ab G}}{ K}\right] = o(1).
            \]
    \end{enumerate}
\end{theorem}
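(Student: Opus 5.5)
Both parts will be driven by the Perron eigenvector. Let $x$ be the nonnegative unit eigenvector of $A_G$ with eigenvalue $\lambda=\lambda(G)$. The key structural consequence of $\lambda\geq K\sqrt{\Delta}$ is that $x$ is spread out. Since $\lambda x_v=\sum_{u\sim v}x_u\leq \sqrt{\deg v}\,\|x\|_2\leq\sqrt{\Delta}$, we get
\[
\|x\|_\infty\leq \sqrt{\Delta}/\lambda\leq 1/K .
\]
So no single vertex carries more than a $1/K^2$ fraction of the $\ell_2$ mass. In $K_{s,t}$ this fails, because mass concentrates on the $s$ hubs. The hypothesis will enter both parts only through this delocalisation bound.

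\emph{Subcritical phase.} I would explore the component of a vertex $v$ by BFS and compare it with a multi-type branching process whose mean matrix is $pA_G$. Weight each explored vertex $u$ by $x_u$. The expected total weight of vertices at distance $k$ (counted along paths, which overcounts) is at most
\[
p^k (A^k x)_v=(p\lambda)^k x_v\leq (1-\varepsilon)^k x_v ,
\]
so the expected total weight of $C(v)$ is at most $x_v/\varepsilon$. To turn weight into size, split vertices into \emph{heavy} ones, with $x_u\geq \tau$, and \emph{light} ones. Since $\|x\|_2=1$, there are at most $\tau^{-2}$ heavy vertices.

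For light vertices I would bound directly the number of vertices in $C(v)$ by the expected number of paths, $\sum_k p^k(A^k\mathbf 1)_v$, and control $\mathbf 1$ through $x$ plus a remainder. The cleaner route is a second-moment/path-counting estimate on $\sum_v |C(v)|$, which gives
\[
\mathbb E\Big[\sum_v |C(v)|^{2}\Big]\lesssim \sum_{k}(1-\varepsilon)^k\,\langle \mathbf 1, A^k\mathbf 1\rangle/\lambda^k\cdot(\text{const}).
\]
Using $\langle\mathbf 1,A^k\mathbf 1\rangle\leq n\lambda^k$ yields $\mathbb E\sum|C|^2=O(n)$, and hence a largest component of size $O(\sqrt n)$ with constant probability. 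That is Theorem~\ref{thm:main lambda}\ref{it:lambda subcritical no assumptions}.

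To upgrade to probability $1-o(1)$ and gain the factor $1/K$, I would use a tail bound rather than a second moment. The exploration of the light part is a subcritical process whose offspring weights are at most $1/K$, so a martingale/Freedman exponential-moment bound on the weighted exploration gives
\[
\Pr\big[\text{weight of } C(v)\geq t\big]\leq \exp(-c\,tK).
\]
Taking $t\asymp \log n/K$ and a union bound over $v$, every component has weight $O(\log n/K)$ a.a.s. Finally, I would convert weight to size. The vertices with $x_u\geq 1/(K'\sqrt n)$ in a component of weight $W$ number at most $WK'\sqrt n$. Vertices with tiny $x_u$ are handled by noting that $x$ is an eigenvector, so the neighbourhoods of those vertices also carry little weight, and I would absorb them by a one-step neighbourhood argument. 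Together these give size $O(\sqrt n\log n/K)$. This conversion step is the one I expect to be the main obstacle: low-weight vertices are exactly where the eigenvector gives no information, and I may need to replace $x$ by $x+\eta\mathbf 1/\sqrt n$ as a supersolution, i.e.\ $A(x+\eta\mathbf 1/\sqrt n)\leq (\lambda+\text{small})(\cdot)$ on the relevant set, which needs care.

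\emph{Supercritical phase.} Here I would use a weighted exploration in the spirit of Krivelevich--Sudakov/DFS. Restrict to a subgraph where $x$ is roughly balanced and run a branching-process comparison with mean matrix $(1+\varepsilon)A/\lambda$. The Perron weight process $M_k=\sum_{u\text{ in gen }k}x_u/(1+\varepsilon)^k$ is a submartingale until the explored set becomes comparable to $\lambda$, at which point depletion makes the comparison break down. Its conditional variance per step is at most $\|x\|_\infty\cdot(\text{mean})\leq (\text{mean})/K$. Starting from a set of total weight $\Theta(1)$, Freedman's inequality therefore shows that the weight grows until size $\gamma\lambda$ except with probability $\exp(-cK)$. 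The delocalisation bound is exactly what makes the failure probability decay in $K$ instead of staying a fixed constant, as it does for $K_{s,t}$.
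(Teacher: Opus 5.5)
\paragraph{Subcritical phase: converting weight into size.} Your subcritical argument has a genuine gap, at exactly the step you flag. The exponential tail for the Perron weight of $C(v)$ is fine: with $w=x/\lVert x\rVert_\infty$ it is \cref{lemma:component_of_a_vertex}\ref{it:subcritical sharpness}, giving $\Pr[x(C(v))\ge t]\le e^{1-\varepsilon^2 tK/4}$. But the Perron vector cannot convert weight into size, because it may vanish or be exponentially small on most of the graph. Neither of your proposed repairs works. Take $G=K_{\lambda+1}\sqcup K_{1,m}$ with $m=\lambda^2/K^2\gg\lambda$. Then $\lambda(G)=\lambda=K\sqrt{\Delta(G)}$, and $x\equiv 0$ on the star, so a neighbourhood argument sees nothing. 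Moreover, $x+\eta\mathbf 1/\sqrt n$ is not a supersolution at the centre of the star: applying $A$ there gives $\eta m/\sqrt n\gg(\lambda+\text{small})\,\eta/\sqrt n$. Joining the two parts by a long path gives a connected example with the same defect.

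The missing idea is to drop the eigenvector and use as weight essentially the expected-path count you wrote down. With $\mu=(1+\varepsilon/4)\lambda$, set $u=\sum_{k\ge 0}\mu^{-k}A^k\mathbf 1$. Then $Au=\mu(u-\mathbf 1)\le\mu u$ and $u\ge 1$. Also $(A^k\mathbf 1)_y\le\lVert Ae_y\rVert_2\lVert A^{k-1}\mathbf 1\rVert_2\le\sqrt{\Delta}\,\lambda^{k-1}\sqrt n$, which gives $u\le 8\sqrt{n\Delta}/(\varepsilon\lambda)\le 8\sqrt n/(\varepsilon K)$. So $w=u/\max u$ is a supersolution with every entry at least $\varepsilon K/(8\sqrt n)$. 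The same exponential tail at weight level $\Theta(\log n)$, plus a union bound over $v$, then shows that all components have order $O(\sqrt n\log n/K)$; this is \cref{lemma:weight_subcritical} in the paper. Here the hypothesis enters through the max/min ratio of $u$, not through delocalisation of $x$.

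\paragraph{Supercritical phase: where $\exp(-cK)$ comes from.} Your supercritical sketch is missing the mechanism that produces $\exp(-cK)$. Following the component of a single vertex can only give a constant. In $K_n$ one may take $K=\sqrt{n-1}$, yet each fixed vertex is isolated in $G_p$ with probability about $e^{-1-\varepsilon}$. Accordingly, your Freedman bound started from one vertex of weight $x_v\le 1/K$ yields only $\exp(-cKx_v)\ge e^{-c}$. Starting from ``a set of total weight $\Theta(1)$'' means exploring the union of the components of $\Theta(K)$ vertices, and that union being large says nothing about a single component of order $\gamma\lambda$. Restricting to a subgraph where $x$ is balanced would destroy the relation $Ax=\lambda x$ that the weighted exploration needs. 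The only inputs actually required are $\sum_{y\sim u}w(y)\ge\lambda w(u)$ and the deterministic bound $\sum_{u\in S}\sum_{y\in N(u)\setminus S}w(y)\ge(1-\gamma)\lambda w(S)$ for $|S|\le\gamma\lambda$.

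The paper instead explores sequentially. Reveal the component of a vertex of maximal weight. If it has fewer than $\gamma\lambda$ vertices, then by the same exponential estimate, with probability $1-e^{-\Omega(K)}$ its weight is below $\sqrt{\varepsilon}K/8$ in the max-normalisation. Delete it (percolation on the remaining graph is untouched) and repeat.

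The extra lemma you need is stability of $\lambda$ under deleting a light set. Comparing the Rayleigh quotients of $w$ and of its restriction gives $\lambda(G-S)\ge(1-2w(S)/w^Tw)\lambda(G)$. In your normalisation this reads $\lambda(G-S)\ge(1-2\lVert x\rVert_\infty x(S))\lambda(G)$, and this is where your delocalisation bound $\lVert x\rVert_\infty\le 1/K$ (equivalently $w^Tw\ge K^2$) is really used. Each deletion then costs only a factor $1-O(\sqrt\varepsilon/K)$. So $\Theta(\sqrt\varepsilon K)$ rounds can be run while $p$ stays supercritical and $\lambda\ge (K/2)\sqrt\Delta$ persists. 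Each round succeeds with constant conditional probability, which gives $\exp(-cK)$.
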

\noindent In particular, if we think of $K$ as tending to infinity, we obtain a.a.s.\ statements in both the supercritical and subcritical phases.

We note that taking $G = K_{s, t}$ shows that the supercritical statement is optimal, up to the dependence on $K$.
By the same example, the subcritical statement is optimal for some pairs $(K, \lambda(G))$. It can, however, be improved if $\lambda(G)$ is small compared to $\ab{G}$; we refer the interested reader to the discussion in \cref{section:concluding remarks}.
We also note that \cref{thm:main large lambda} immediately recovers the result of Bollob\'as, Borgs, Chayes and Riordan~\cite{MR2599196} for dense graphs, for which both $\lambda(G)$ and $\Delta(G)$ are of order $\Theta(\ab G)$.

\vspace{10px}

The remainder of this paper is organized as follows.
First, the rest of this section introduces our notation and outlines the proofs of the supercritical statements.
Section~\ref{section:preliminaries} establishes various preliminary statements used later and Section~\ref{section:component_of_a_vertex} analyses the behaviour of the connected component of a fixed vertex in the percolated graph $G_p$. 
We prove Theorems \ref{thm:main avg degree}, \ref{thm:main lambda} and \ref{thm:main large lambda} in Section~\ref{section:proofs of theorems} and conclude with possible research directions in Section~\ref{section:concluding remarks}.

\subsection{Notation}\label{section:notation}
For a positive integer $n$, we denote by $[n]$ the set $\{1,2,\dots,n\}$.
For a graph $G$ we write $|G|$ for the number of vertices in $G$, $e(G)$ for the number of edges in $G$, $d(G)$ for its average degree, and $\Delta(G)$ for its maximum degree.
For a vertex $x \in V(G)$, we let $N_G(x)$ denote the neighbourhood of $x$, and $d_G(x)=\ab{N_G(x)}$ its degree.
For two disjoint sets $S, T \subseteq V(G)$, we write $E_G(S, T)$ to denote the set of edges in $G$ that go between $S$ and $T$, i.e., $E_G(S, T) = \{\{u, v\} \in E(G): u \in S, v \in T\}$. We also denote by $E_G(S)$ the set of edges with both endpoints in $S$.
We sometimes omit the subscript and write $N(x)$ for $N_G(x)$, etc., if the underlying graph is clear from the context.
For a vector $w \in \mathbb{R}^{V(G)}$ and a subset $S \subseteq V(G)$ we write $w(S)$ for $\sum_{x \in S} w(x)$, and denote by $G-S$ the induced subgraph of $G$ obtained by deleting the vertices in $S$. If $X_1,\dots,X_k$ are random variables, we denote by $\sigma(X_1,\dots,X_k)$ the $\sigma$-algebra they define.
Throughout, we omit floor and ceiling signs whenever doing so does not affect the argument.

\subsection{Proof outline}\label{section:proof outline}
To outline the general idea of our proofs, let us focus on the supercritical part of \cref{thm:main large lambda}.
That is, we fix $\varepsilon > 0$, let $G$ be a graph and $p \geq (1+\varepsilon)/\lambda(G)$.

The general strategy of our proof is as follows.
First, we fix a vertex $v$ and we would like to understand how the size of the connected component $\mathcal{C}_v$ of $G_p$ containing $v$ behaves.
In particular, we would like to show that with constant probability, say $c$, this connected component has size at least $\gamma\lambda(G)$.
Moreover, if this is not the case, then we would like to look at the graph $G_p$ with this component removed and repeat the argument there.
After repeating this $k = \omega(1)$ times, assuming that the remaining graph still behaves supercritically,  we would get that the probability that $G_p$ does not contain a large connected component is at most $(1-c)^k = o(1)$.

To understand how the size of $\mathcal{C}_v$ behaves, let us first sketch the argument of Krivelevich and Sudakov \cite{MR3085765} that works when $G$ is $d$-regular, in which case $\lambda(G) =d$.
To that end, let $Y_1, \dots, Y_{e(G)}$ be independent random variables, each taking value $1$ with probability $p$ and $0$ otherwise.
We want to reveal $\mathcal{C}_v$ one edge at a time using the following random process taking these variables as an input.
We will start with $\mathcal{C}_v= \{v\}$ and in each step $t$ we will consider a new edge $e \in E(G)$ connecting $\mathcal{C}_v$ to the rest of the graph.
If $Y_t = 1$, then this edge is present in $G_p$ and we will add the corresponding endpoint to $\mathcal{C}_v$.
Otherwise, this edge is not present, and we proceed to the next edge.
The process stops when there are no more unrevealed edges connecting $\mathcal{C}_v$ to the rest of the graph.

Now suppose that $|\mathcal{C}_v| = C < \gamma d$ and so the process stops at some step $t$.
Then, among $Y_1, \dots, Y_t$ we have seen exactly $C-1$ ``yeses", as each ``yes" increases the order of $\mathcal{C}_v$ by exactly one.
On the other hand, we must have seen a ``no" for each edge of $G$ that goes from $\mathcal{C}_v$ to the rest of $G$.
As $G$ is $d$-regular, there are at least $C \cdot (d - C) \geq (1-\gamma)Cd$ such edges, and hence $t \geq (1-\gamma)Cd$. Thus, at most a $1/((1-\gamma)d)$-fraction of the random variables $Y_1,\dots,Y_t$ are ``yes''.
However, if $\gamma$ is sufficiently small compared to $\varepsilon$, a Chernoff bound shows that this is rather unlikely to happen in our sequence $Y_1, \dots, Y_t$, since $p \geq (1+\varepsilon)/d$.
In fact, by analysing these probabilities one can show that with constant probability $\mathcal{C}_v$ has size at least $\gamma d$ --- and that if not, then a.a.s.\ $|\mathcal{C}_v| \ll \log d$, say.
In particular, in the latter case, after removing $\mathcal{C}_v$ from $G$ the average degree basically doesn't decrease, and since we have not revealed any edge of $G_p-\mathcal{C}_v$, we can then repeat the argument inside there.

Unfortunately, this argument inherently required that every single set $S \subseteq V(G)$ of size at most $\gamma\lambda(G)$ sends at least $(1-\gamma)\lambda(G)|S|$ edges to the rest of the graph, which can be far from true for general graphs.
To circumvent that, we consider a weighted version of this process --- where the weight vector is an eigenvector $w \in [0,1]^{V(G)}$ of $G$ corresponding to $\lambda(G)$.
Using the definition of an eigenvalue, we have for each set $S \subseteq V(G)$ of size at most $\gamma\lambda(G)$ that $\sum_{x \in S}\sum_{y \in N(x) \setminus S}w(y) \geq (1-\gamma)\lambda(G)w(S)$.
%\yuval{I guess that ``small'' means less than $\gamma \lambda(G)$ which is where the $1-\gamma$ comes from? If so we should spell that out} 
That is, while the set might not expand in the usual definition, the weight of each small set expands by roughly $\lambda(G)$.
We note that the idea of using the eigenvector as a weight vector guiding the exploration has already appeared in \cite{MR1062053}, to study percolation on a certain type of tree, and has also later been used in \cite{nelson2017probability}.

With this weight vector in hand, we can pick $v\in V(G)$ with $w(v) = 1$ and explore $\mathcal{C}_v$ in the same way as described above.
Now, each query will have some weight $w(y)$ of the vertex $y$ that we might potentially add to $\mathcal{C}_v$ in a given step.
When the process stops with $|\mathcal C_v|\leq \gamma \lambda(G)$, the weight of ``yeses'' is exactly $w(\mathcal{C}_v) -1$, while we can show that the weight of ``noes'' is at least $\sum_{x \in \mathcal{C}_v}\sum_{y \in N(x) \setminus \mathcal{C}_v}w(y) \geq (1-\gamma)\lambda(G)w(\mathcal{C}_v)$.
Hence, if the process stops early, the weight of ``yeses'' must have been much smaller than expected.
By a similar analysis to the one above, we can in fact show that with constant probability $\mathcal{C}_v$ grows to size at least $\gamma\lambda(G)$, and if not, then a.a.s.\ $w(\mathcal{C}_v)$ is small.

The final missing piece is therefore to show that if $w(\mathcal{C}_v)$ is small, then the spectral radius of $G$ won't decrease significantly when we remove $\mathcal{C}_v$.
Given the subtleties discussed in the introduction, it is, however, not surprising that such a statement is not true in general (consider, for example, $G=K_{1, n}$).
Nonetheless, we can still prove such a statement under the assumption that $\lambda(G) \gg \sqrt{\Delta(G)}$, and make the strategy work in this case, thus proving \cref{thm:main large lambda}\ref{it:large lambda supercritical}.
In other cases we resort to other, related, methods.
For example, to prove \cref{thm:main avg degree}, we additionally consider the original unweighted process, to argue that while $\lambda(G)$ might decrease, the average degree will in fact not decrease significantly.
This will be sufficient if $p \geq (1+\varepsilon)/d$.

\section{Preliminaries}\label{section:preliminaries}
In this section, we establish the auxiliary results used in the proofs of the main theorems.
We first develop the use of the eigenvector as a weight function guiding the exploration process.
Then, we prove a concentration inequality, which will replace the Chernoff inequality in the analysis of the exploration process, as discussed above.

\subsection{The eigenvector as a weight vector}
As described above, we would like to use the eigenvector $w$ corresponding to $\lambda(G)$ as a weight vector guiding the exploration of $G_p$.
For that, we always assume that $w$ is nonnegative and normalized, as in the following proposition, which is a simple consequence of the Perron--Frobenius theorem (see, e.g., \cite[Section~2.2]{BrouwerHaemers}).
\begin{proposition}\label{prop:perron_frobenius}
    Let $G$ be a non-empty graph with adjacency matrix $A_G$. Then%\yuval{I don't think the notation $\|A_G\|_2$ is quite right. To me this would mean the $\ell_2$ norm of $A_G$ when viewed as a vector, i.e.\ the Frobenius norm. But we mean the $2\to 2$ operator norm}
    \[
        \lambda(G)
        =
        \max_{x\in\mathbb{R}^{V(G)}\setminus\{0\}}
        \frac{x^T A_Gx}{x^Tx}.
    \]
    Moreover, there exists a vector $w\in[0,1]^{V(G)}$ such that
    \[
        A_Gw=\lambda(G)w
        \qquad\text{and}\qquad
        \lVert w\rVert_\infty=1.
    \]
\end{proposition}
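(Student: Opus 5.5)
The plan is to derive both statements from standard linear algebra applied to the real symmetric, entrywise nonnegative matrix $A = A_G$. The Rayleigh quotient formula comes first. By the spectral theorem, $A$ has an orthonormal eigenbasis $u_1,\dots,u_n$ with real eigenvalues $\mu_1\geq\dots\geq\mu_n$, and $\lambda(G)=\mu_1$ by definition. Writing $x=\sum_i c_iu_i$ gives
\[
    \frac{x^TAx}{x^Tx} = \frac{\sum_i \mu_i c_i^2}{\sum_i c_i^2} \leq \mu_1 ,
\]
with equality at $x=u_1$. This proves the first claim.

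For the eigenvector, I will use nonnegativity rather than quote Perron--Frobenius for irreducible matrices, since $G$ need not be connected. Let $x$ be any maximiser of the Rayleigh quotient, and let $|x|$ denote the vector with entries $|x(v)|$. Because the entries of $A$ are nonnegative,
\[
    |x|^TA|x| = \sum_{u,v}A_{uv}|x(u)||x(v)| \geq \Big|\sum_{u,v}A_{uv}x(u)x(v)\Big| \geq x^TAx ,
\]
while $|x|^T|x| = x^Tx$. Hence $|x|$ is also a maximiser.

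Next I need that every maximiser is an eigenvector for $\mu_1$. Let $y$ be a maximiser and expand it as $y=\sum_i c_iu_i$. Equality in the displayed bound $\sum_i\mu_ic_i^2\leq\mu_1\sum_ic_i^2$ forces $c_i=0$ whenever $\mu_i<\mu_1$. So $y$ lies in the $\mu_1$-eigenspace, and $Ay=\lambda(G)y$. (Alternatively, set the gradient of the quotient to zero.) Applying this to $y=|x|$ gives a nonzero eigenvector with nonnegative entries.

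Finally, set $w=|x|/\lVert x\rVert_\infty$. This is legitimate because $x\neq 0$, and scaling preserves the eigenvector equation. Then $w\in[0,1]^{V(G)}$ and $\lVert w\rVert_\infty=1$.

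The one step needing care is passing from ``$|x|$ maximises the quotient'' to ``$|x|$ is an eigenvector'', which the equality case of the spectral decomposition handles. The non-empty hypothesis is not actually needed for this argument; it only guarantees $\lambda(G)>0$. Without it, $A=0$ and any vector works.
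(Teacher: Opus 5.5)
Your proof is correct, but it takes a different route from the paper. The paper gives no argument here: it simply invokes the Perron--Frobenius theorem, citing Brouwer--Haemers.

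Your argument is self-contained and more elementary. The spectral theorem gives the Rayleigh-quotient characterization. Replacing $x$ by $|x|$, together with the equality case $\sum_i(\mu_1-\mu_i)c_i^2=0$, then produces a nonnegative top eigenvector directly.

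What this buys you is that you use only symmetry and entrywise nonnegativity of $A_G$. So you handle disconnected $G$ with no extra work. The classical irreducible form of Perron--Frobenius would instead have to be applied to a component $H$ with $\lambda(H)=\lambda(G)$, with the resulting vector extended by zero.

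What the citation buys in return is more information: for connected $G$ the eigenvector is strictly positive and $\lambda(G)$ is simple. None of this is used later in the paper, which only needs some $w\in[0,1]^{V(G)}$ with $A_Gw=\lambda(G)w$ and a coordinate equal to $1$. That is exactly what your argument delivers.
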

\noindent In what follows, we will use such a vector $w$ without further reference to this proposition.

Another ingredient described in the proof outline is the statement that every small set expands in weight by a factor of roughly $\lambda(G)$.
This will be given by the following lemma.
\begin{lemma}[Small sets expand in weight]\label{lemma:small_sets_expand}
    Let $G$ be a graph, let $0 < \gamma \leq 1$ and $\lambda >0$. Moreover, let $w \in [0,1]^{V(G)}$ be such that for each $x \in V(G)$ we have $\sum_{y \in N(x)} w(y) \geq \lambda w(x)$.
    Then, for every $S \subseteq V(G)$ with $|S| \leq \gamma\lambda$ it holds that
    \[
        \sum_{x \in S} \sum_{y \in N(x) \setminus S} w(y) \geq \left(1-\gamma\right)\lambda w(S).
    \]
\end{lemma}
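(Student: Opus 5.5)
The plan is to start from the full weighted neighbourhood sum and subtract what lies inside $S$. Summing the hypothesis over $x \in S$ gives
\[
    \sum_{x\in S}\sum_{y\in N(x)} w(y) \geq \lambda w(S).
\]
The left-hand side splits into the quantity we want, $\sum_{x\in S}\sum_{y\in N(x)\setminus S} w(y)$, plus the internal term $I = \sum_{x\in S}\sum_{y\in N(x)\cap S} w(y)$. It therefore suffices to show $I \leq \gamma\lambda w(S)$.

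To bound $I$, I would swap the order of summation. Each $y \in S$ is counted once for every neighbour $x \in S$, so
\[
    I = \sum_{y\in S} w(y)\,\ab{N(y)\cap S}.
\]
Since $\ab{N(y)\cap S} \leq \ab{S}-1 < \ab S \leq \gamma\lambda$, this gives $I \leq \gamma\lambda\, w(S)$, using only that $w \geq 0$. Substituting back yields
\[
    \sum_{x \in S} \sum_{y \in N(x) \setminus S} w(y) \geq \lambda w(S) - I \geq (1-\gamma)\lambda w(S),
\]
as required.

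The only step needing any care is the reindexing of the internal term. A naive bound such as $w(y)\leq 1$ would give $I \leq \ab S^2$, which cannot be compared with $w(S)$ when the weights on $S$ are small. Exchanging the sums attaches the weight to the counted vertex itself, so the bound is automatically proportional to $w(S)$. Neither the upper bound $w\leq 1$ nor $\gamma\leq 1$ is actually needed; the second merely makes the conclusion nontrivial.
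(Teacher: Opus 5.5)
Your proof is correct and follows the same route as the paper: subtract the internal term from $\lambda w(S)$ and show it is at most $|S|\,w(S) \leq \gamma\lambda w(S)$, using only $w\geq 0$. The only difference is cosmetic: the paper bounds each inner sum $\sum_{y\in N(x)\cap S} w(y)$ directly by $w(S)$ for every $x\in S$, so no exchange of summation is needed (your double counting gives the marginally sharper factor $|S|-1$).
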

\begin{proof}
    Since $w$ is nonnegative, for every $x \in S$ we have
    \[
        \sum_{y \in N(x) \setminus S} w(y) = \left(\sum_{y \in N(x)}w(y)\right) - \left( \sum_{y \in N(x) \cap S} w(y)\right) \geq \lambda w(x) - w(S),
    \]
    where for the final step we used our assumption on $w$.
    In particular, using that $|S| \leq \gamma\lambda$, we get
    \[
        \sum_{x \in S}\sum_{y \in N(x) \setminus S} w(y) \geq \sum_{x \in S} \bigl(\lambda w(x) - w(S) \bigr) = \lambda w(S) - |S|w(S) \geq \bigl(1- \gamma \bigr)\lambda w(S),
    \]
    as required.
\end{proof}

As sketched above, in the proof of \cref{thm:main large lambda}\ref{it:large lambda supercritical}, we would like to repeatedly reveal a connected component of a vertex and remove it from the graph, until we find a component of size $\Omega(\lambda(G))$.
To make this work, we need to argue that after removing a few small components the remaining graph still behaves supercritically, i.e., that its spectral radius has not decreased significantly compared to that of $G$.
The following lemma says that indeed, under the assumption that $\Delta(G) \ll \lambda(G)^2$, removing a set of small weight cannot decrease the spectral radius by much.
We remark that without this assumption the statement is not true in general, as the example of $K_{1,n}$ illustrates.

\begin{lemma}[Removing small sets does not decrease the spectral radius]\label{lemma:spectral_radius_stability}
    Let $G$ be a graph and let $w \in [0,1]^{V(G)}$ be an eigenvector of $G$ corresponding to the eigenvalue $\lambda(G)$ such that $\lVert w\rVert_{\infty} = 1$.
    Assume moreover that $\lambda(G) \geq K\sqrt{\Delta(G)}$ for some $K >0$.
    Then, for each $S \subseteq V(G)$ we have that
    \[
        \lambda(G -S) \geq \left( 1- \frac{2w(S)}{K^2} \right) \lambda(G).
    \]
\end{lemma}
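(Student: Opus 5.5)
We use the Rayleigh quotient from \cref{prop:perron_frobenius} with the test vector $w' = w\cdot \mathbbm{1}_{V(G)\setminus S}$, i.e.\ $w$ restricted to $G-S$. This gives
\[
    \lambda(G-S) \geq \frac{w'^T A_{G-S} w'}{w'^T w'} = \frac{\sum_{xy\in E(G-S)} 2w(x)w(y)}{\sum_{x\notin S} w(x)^2}.
\]
The denominator is at most $\lVert w\rVert_2^2$. For the numerator we start from $w^TA_Gw = \lambda(G)\lVert w\rVert_2^2$. Removing $S$ loses exactly the terms of edges meeting $S$, so the loss is at most
\[
    2\sum_{x\in S} w(x)\sum_{y\in N(x)} w(y) = 2\lambda(G)\, w_2(S),
\]
where $w_2(S)=\sum_{x\in S}w(x)^2$. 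Here we used the eigenvector equation $\sum_{y\in N(x)}w(y)=\lambda(G)w(x)$. Since $w\in[0,1]^{V(G)}$, we have $w_2(S)\le w(S)$. This yields
\[
    \lambda(G-S)\ge \lambda(G)\left(1-\frac{2w(S)}{\lVert w\rVert_2^2}\right).
\]

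It therefore remains to show $\lVert w\rVert_2^2 \ge K^2$. This step is the only real content, and it is where $\lambda(G)\ge K\sqrt{\Delta(G)}$ enters. Let $v$ be a vertex with $w(v)=1$. Applying the eigenvector equation twice,
\[
    \lambda(G)^2 = \lambda(G)^2 w(v) = (A_G^2 w)(v) = \sum_{u\in N(v)}\sum_{y\in N(u)} w(y).
\]
Now apply Cauchy--Schwarz to this double sum. It is a sum over walks $v,u,y$ of length two, of which there are at most $\Delta(G)^2$. A cleaner approach groups the terms by $y$: the coefficient of $w(y)$ is the number of common neighbours $c(y)$ of $v$ and $y$. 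These satisfy $\sum_y c(y) \le \Delta(G)^2$ and $c(y)\le \Delta(G)$, so
\[
    \sum_y c(y)^2 \le \Delta(G)\sum_y c(y)\le \Delta(G)^3.
\]
That gives only $\lambda(G)^2 \le \Delta(G)^{3/2}\lVert w\rVert_2$, which is too weak.

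The better route is one step of the eigenvector equation rather than two. We have $\lambda(G) = \sum_{u\in N(v)} w(u)$, and by Cauchy--Schwarz
\[
    \lambda(G) \le \sqrt{d(v)}\,\Bigl(\sum_{u\in N(v)} w(u)^2\Bigr)^{1/2} \le \sqrt{\Delta(G)}\,\lVert w\rVert_2.
\]
Hence $\lVert w\rVert_2^2 \ge \lambda(G)^2/\Delta(G) \ge K^2$. Combined with the first step, this gives $\lambda(G-S)\ge (1-2w(S)/K^2)\lambda(G)$, as claimed.

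The main point to check is the bookkeeping in the numerator: each edge $xy$ contributes $2w(x)w(y)$ to $w^TA_Gw$, and the edges meeting $S$ are counted at most twice by $2\sum_{x\in S}w(x)\sum_{y\in N(x)}w(y)$. So the loss bound is valid, if slightly wasteful. We also need $w'\ne 0$ to use the quotient. If $w'=0$, then $\lVert w\rVert_2^2 = w_2(S)\le w(S)$, so $2w(S)/K^2\ge 2$ and the right-hand side of the claim is negative, which makes the statement trivial.
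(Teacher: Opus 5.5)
Your proof is correct and is essentially the paper's argument: you use the restriction of $w$ to $V(G)\setminus S$ as a Rayleigh test vector to get $\lambda(G-S)\ge\bigl(1-2w(S)/\lVert w\rVert_2^2\bigr)\lambda(G)$, bound $\lVert w\rVert_2^2\ge\lambda(G)^2/\Delta(G)\ge K^2$ via Cauchy--Schwarz on the neighbourhood of a vertex with $w(v)=1$, and handle the degenerate case $w'=0$ exactly as the paper does. The abandoned length-two-walk attempt plays no role and can simply be deleted from the write-up.
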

\begin{proof}
    We will in fact show that
    \[
        \lambda(G-S) \geq \left( 1- \frac{2w(S)}{w^Tw} \right)\lambda(G).
    \]
    Then we can lower bound $w^Tw$ using that the maximum degree of $G$ is not large.
    More specifically, letting $v$ be a vertex with $w(v) = 1$ and $d(v) \leq \Delta(G) \leq  \lambda(G)^2/K^2$, we get that
    \[
        w^Tw \geq \sum_{y \in N(v)} w(y)^2 \geq d(v) \cdot \left( \sum_{y \in N(v)} \frac{w(y)}{d(v)} \right)^2 = \frac{\lambda(G)^2}{d(v)} \geq K^2,
    \]
    where we used the Cauchy--Schwarz inequality.

    To prove the claim, we fix a set $S \subseteq V(G)$ and let $u$ denote the restriction of $w$ to $V(G) \setminus S$.
    We first note that if $u = 0$, then $w^Tw = \sum_{x \in S} w(x)^2 \leq \sum_{x \in S} w(x) = w(S)$.
    In particular, since $\lambda(G-S) \geq 0$, the statement holds vacuously.

    We can therefore assume that $u \neq 0$ and want to show that $u^TA_{G-S}u$ does not decrease much compared to $w^TA_Gw$.
    To that end, we note that
    \[
       \lambda(G) = \frac{w^TA_Gw}{w^Tw} \hspace{10px} \text{and} \hspace{10px} \lambda(G-S) \geq \frac{u^TA_{G-S}u}{u^Tu} \geq \frac{u^TA_{G-S}u}{w^Tw}.
    \]
    Moreover, by using that $0 \leq w \leq 1$, we get
    \[
        w^TA_Gw - u^TA_{G-S}u \leq 2\sum_{x \in S}w(x) \sum_{y \in N_G(x)} w(y) = 2\sum_{x \in S} \lambda(G) w(x)^2 \leq 2\lambda(G)w(S).
    \]
    Hence, by combining the above observations we get
    \[
        \lambda(G) - \lambda(G-S) \leq \frac{w^TA_Gw - u^TA_{G-S}u}{w^Tw} \leq \lambda(G) \frac{2w(S)}{w^Tw},
    \]
    as desired.    
\end{proof}

Finally, we turn to the subcritical regime, where we would like to show that
a.a.s.\ $G_p$ does not contain a large connected component. 
Following the approach outlined above, one might take $w\in[0,1]^{V(G)}$ to be a nonnegative eigenvector corresponding to $\lambda(G)$ and use that each set does not expand by more than a $\lambda(G)$-factor in weight.
Hence, if there is a component with large weight in $G_p$, then during its exploration we must have seen many more ``yeses'' than expected.
This way we can show that a.a.s.\ $G_p$ does not contain a connected component with large weight.
Unfortunately, however, an upper bound on the weight does not translate into a bound on the size, as $w(v)$ might be arbitrarily small for a vertex $v \in V(G)$, so this is not good enough for us.

To overcome this difficulty, we show that we can in fact construct another weight vector $w$, for which it still holds that each set does not expand by much more than $\lambda(G)$ but where we also have a lower bound on $w(v)$ for each vertex.
This will allow us to prove \cref{thm:main large lambda}\ref{it:large lambda subcritical} by making use of the same approach and later translating the upper bound on the weight of each component into an upper bound on its size.
The suitable weight vector will be given by the following lemma, and is defined as a weighted count of walks starting at a given vertex.
We remark that we will use a different argument for the subcritical phase in \cref{thm:main lambda}\ref{it:lambda subcritical no assumptions}.

\begin{lemma}[Weight vector for the subcritical regime]\label{lemma:weight_subcritical}
    For each $0 < \varepsilon \leq 1$ and every non-empty graph $G$, there exists a vector $w \in [0,1]^{V(G)}$ such that for each $x \in V(G)$ we have
    \begin{enumerate}
        \item $w(x) \geq \frac{\varepsilon\lambda(G)}{2\sqrt{\ab G \Delta(G)}}$, \text{and}
        \item $\sum_{y \in N(x)} w(y) \leq (1+\varepsilon)\lambda(G)w(x)$.
    \end{enumerate}
\end{lemma}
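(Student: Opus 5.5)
We take $w$ to be a weighted count of walks started at each vertex. Write $\lambda=\lambda(G)$, $A=A_G$ and $\mu=(1+\varepsilon)\lambda$. Define
\[
  \tilde w = \sum_{k\geq 0} \mu^{-k} A^k \mathbf{1},
\]
so that $\tilde w(x)=\sum_{k\geq0}\mu^{-k}\cdot\#\{\text{walks of length }k\text{ from }x\}$. The series converges because the spectral radius of $A/\mu$ is $1/(1+\varepsilon)<1$. The vector is entrywise positive, and
\[
  A\tilde w=\mu\sum_{k\geq 1}\mu^{-k}A^k\mathbf 1=\mu(\tilde w-\mathbf 1)\leq \mu\tilde w .
\]
We then set $w=\tilde w/\lVert\tilde w\rVert_\infty$. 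Property 2 is invariant under scaling, so it follows immediately from this inequality: $\sum_{y\in N(x)}w(y)\leq(1+\varepsilon)\lambda w(x)$. Moreover $w\in[0,1]^{V(G)}$ by construction.

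The work is in property 1. Since $\tilde w\geq \mathbf 1$ entrywise, we have $w(x)\geq 1/\lVert\tilde w\rVert_\infty$ for every $x$. It therefore suffices to show
\[
  \lVert\tilde w\rVert_\infty\leq \frac{2\sqrt{\ab G\,\Delta(G)}}{\varepsilon\lambda}.
\]
We bound each entry using the spectral norm. Write $n=\ab G$. For $k\geq1$ we split $A^k\mathbf 1=A^{k-1}(A\mathbf 1)$. Since $A$ is symmetric,
\[
  (A^k\mathbf 1)(x)=e_x^TA^{k-1}(A\mathbf1)\leq \lVert A^{k-1}\rVert\,\lVert A\mathbf 1\rVert_2\leq \lambda^{k-1}\sqrt{\textstyle\sum_y d(y)^2}\leq \lambda^{k-1}\sqrt{n}\,\Delta(G).
\]
This loses a factor of $\sqrt{\Delta}$. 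To avoid the loss, we instead move one factor of $A$ onto $e_x$:
\[
  (A^k\mathbf1)(x)=(Ae_x)^TA^{k-2}(A\mathbf 1)\quad\text{for }k\geq2.
\]
This gives $\sqrt{d(x)}\,\lambda^{k-2}\sqrt n\,\Delta$, which is still not right. The cleanest choice is
\[
  (A^k\mathbf1)(x)=(Ae_x)^TA^{k-1}\mathbf 1\leq\lVert Ae_x\rVert_2\,\lambda^{k-1}\lVert\mathbf1\rVert_2=\sqrt{d(x)}\,\lambda^{k-1}\sqrt n\leq \sqrt{n\Delta(G)}\,\lambda^{k-1}.
\]
Summing over $k$, with the term $1$ for $k=0$, gives
\[
  \tilde w(x)\leq 1+\frac{\sqrt{n\Delta}}{\lambda}\sum_{k\geq1}(1+\varepsilon)^{-k}=1+\frac{\sqrt{n\Delta}}{\varepsilon\lambda}.
\]

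It remains to absorb the leading $1$, which is the only delicate point. We need $1\leq \sqrt{n\Delta}/(\varepsilon\lambda)$, and it suffices to have $\lambda\leq\sqrt{n\Delta}$ together with $\varepsilon\leq1$. The inequality $\lambda\leq\sqrt{n\Delta}$ holds since $\lambda\leq\Delta$ and $\Delta\leq n$ give $\lambda^2\leq \Delta\cdot n$. Alternatively, for a Perron eigenvector $v$ with $\lVert v\rVert_\infty=1$ attained at $x$, we have $\lambda\leq\sum_{y\in N(x)}v(y)\leq \Delta$. Hence
\[
  \lVert\tilde w\rVert_\infty\leq \frac{2\sqrt{n\Delta}}{\varepsilon\lambda},
\]
which yields $w(x)\geq \varepsilon\lambda/(2\sqrt{n\Delta})$, as required. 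Since $G$ is non-empty we have $\lambda>0$, so $\mu>0$ and all of the above is well defined.
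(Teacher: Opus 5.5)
Your proof is correct and is essentially the paper's argument. It uses the same walk-counting vector $\sum_{k\ge 0}((1+\varepsilon)\lambda)^{-k}A_G^k\mathds{1}$, the same recursion $A_G\tilde w=(1+\varepsilon)\lambda(\tilde w-\mathds{1})$ for property~2, and the same Cauchy--Schwarz bound $e_x^TA_G^k\mathds{1}\le\lVert A_Ge_x\rVert_2\,\lambda^{k-1}\sqrt{\ab G}$ for property~1. The differences are cosmetic: you normalize by $\lVert\tilde w\rVert_\infty$ rather than by the explicit constant $2\sqrt{\ab G\Delta(G)}/(\varepsilon\lambda)$, and you spell out the absorption of the leading $1$ via $\lambda\le\Delta(G)\le\ab G$, which the paper leaves implicit; the exploratory false starts with the weaker splittings can simply be deleted.
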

\begin{proof}
    We write $\lambda = \lambda(G)$ and for each $x \in V(G)$ and $k\geq 0$, let $W_x^{(k)}$ denote the number of walks in $G$ of length $k$ that start at $x$.
    We define
    \[
        u(x) = \sum_{k=0}^\infty \left((1+\varepsilon)\lambda\right)^{-k}W_x^{(k)}.
    \]
    We will show that $u(x) \leq \frac{2\sqrt{\ab G \Delta(G)}}{\varepsilon\lambda}$.
    Then taking $w(x) = u(x)\cdot \frac{\varepsilon\lambda}{2\sqrt{\ab G \Delta(G)}}$ and using $u(x)\geq 1$ ensures that the first property is satisfied while also $w(x)\leq 1$.
    Moreover, since
    \[
        u(x) = 1 + \bigl((1+\varepsilon)\lambda\bigr)^{-1} \cdot \sum_{y \in N(x)} u(y) \geq \bigl((1+\varepsilon)\lambda\bigr)^{-1} \cdot \sum_{y \in N(x)} u(y),
    \]
    the second property is satisfied as well.
    
    To give an upper bound on $u(x)$, we let $\mathds{1}$ be the all-ones vector and $e_x$ be the vector with $e_x(x) = 1$ and all other entries set to $0$.
    Then note that for each $k \geq 1$ we have
    \[
        W_x^{(k)} = e_x^TA_G^k \mathds{1} \leq \lVert A_Ge_x \rVert_2\lVert A_G^{k-1}\mathds{1}\rVert_2 \leq \sqrt{d(x)} \cdot \lambda^{k-1} \sqrt{|G|} \leq \lambda^{k-1}\sqrt{\ab G \Delta(G)},
    \]
    where we used the Cauchy--Schwarz inequality and that $\lambda$ is the operator norm of $A_G$.
    In particular, we get that for each $x \in V(G)$,
    \[
        u(x) =\sum_{k=0}^\infty \bigl((1+\varepsilon)\lambda\bigr)^{-k}W_x^{(k)} \leq 1  + \frac{\sqrt{\ab G \Delta(G)}}{\lambda} \sum_{k=1}^{\infty} (1+\varepsilon)^{-k} \leq \frac{2\sqrt{\ab G \Delta(G)}}{\varepsilon\lambda},
    \]
    which concludes the proof.
\end{proof}

\subsection{Concentration inequality for the exploration process}
In this subsection, we want to prove the following lemma, which says that it is unlikely that we see much more (or much less) weight on the ``yeses'' during the exploration process than we would expect.
In our applications of this lemma, $q_i$ will be the weight of the endpoint of the edge considered at step $i$ that lies outside of the currently revealed component.
Although $q_i$ may depend on the outcomes of the previous queries, it is determined before the outcome of the $i$th query is revealed.
\begin{lemma}[Concentration inequality for exploration]\label{lemma:tail_bounds}
    Let $m \in \mathbb{N}$, $0 \leq a \leq 1/2$ and $p \in [0,1]$.
    Let $Y_1, \dots, Y_m$ be mutually independent $\mathrm{Ber}(p)$ random variables.
    For each $t \in [m]$, write $\mathcal{F}_{t} = \sigma(Y_1, \dots, Y_t)$ and let $q_t$ be an $\mathcal{F}_{t-1}$-measurable random variable taking values in $[0,1]$.
    Then for each $C \geq 0$ we have
    \begin{enumerate}
        \item\label{it:bound more yes} $\Pr[\sum_{i=1}^m \left(aY_iq_i - pa(1+a)q_i\right) \geq C] \leq e^{-C}$, \text{and}
        \item\label{it:bound more no} $\Pr[\sum_{i=1}^m \left( aY_iq_i - pa(1-a)q_i \right) \leq -C] \leq e^{-C}$.
    \end{enumerate}
\end{lemma}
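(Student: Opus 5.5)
This is a standard exponential-supermartingale (Freedman/Bernstein-type) argument; for each of the two items I will exhibit a supermartingale with initial value $1$ and apply Markov's inequality.

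For item (1), set
\[
M_t=\exp\Bigl(\sum_{i=1}^t \bigl(aY_iq_i - pa(1+a)q_i\bigr)\Bigr), \qquad M_0=1.
\]
The goal is to show $\mathbb{E}[M_t\mid\mathcal F_{t-1}]\le M_{t-1}$. Since $q_t$ is $\mathcal F_{t-1}$-measurable and $Y_t$ is independent of $\mathcal F_{t-1}$, this reduces to a deterministic inequality: for every $q\in[0,1]$,
\[
\mathbb{E}\bigl[e^{aqY}\bigr]=1+p\bigl(e^{aq}-1\bigr)\le \exp\bigl(p(e^{aq}-1)\bigr)\le \exp\bigl(pa(1+a)q\bigr).
\]
The last step needs $e^{x}-1\le x+x^2$ for $x=aq\in[0,1/2]$, together with $(aq)^2\le a^2q$ (which uses $q\le 1$). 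The inequality $e^x-1\le x+x^2$ holds on $[0,1]$, since $e^x-1-x=\sum_{k\ge2}x^k/k!\le x^2(e-2)$ there. Iterating the tower property then gives $\mathbb{E}[M_m]\le 1$. Markov's inequality finally yields $\Pr[\log M_m\ge C]=\Pr[M_m\ge e^C]\le e^{-C}$.

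Item (2) is symmetric, with the sign flipped. Set
\[
N_t=\exp\Bigl(-\sum_{i=1}^t\bigl(aY_iq_i-pa(1-a)q_i\bigr)\Bigr).
\]
The required one-step bound is
\[
\mathbb{E}\bigl[e^{-aqY}\bigr]=1+p\bigl(e^{-aq}-1\bigr)\le \exp\bigl(p(e^{-aq}-1)\bigr)\le \exp\bigl(-pa(1-a)q\bigr).
\]
This uses $e^{-x}-1\le -x+x^2/2\le -x+x^2$ for $x\ge0$, and again $(aq)^2\le a^2q$. As before, $\mathbb{E}[N_m]\le 1$, and Markov's inequality bounds $\Pr[N_m\ge e^{C}]$ by $e^{-C}$. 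The event $N_m\ge e^C$ is exactly the event in item (2).

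The only mildly delicate point is the elementary exponential inequalities and where the hypotheses $a\le1/2$ and $q_i\le1$ enter. The measurability assumption is what allows the predictable $q_t$ to be pulled out of the conditional expectation. No result from earlier in the paper is needed.
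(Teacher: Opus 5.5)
Your proposal is correct and takes essentially the same approach as the paper. The paper uses the same exponential supermartingales $M_t$ and $N_t$, the same one-step bounds from elementary Taylor estimates together with $q\le 1$, and Markov's inequality at the end; the only cosmetic difference is that you apply $1+y\le e^y$ before the Taylor bound rather than after it.
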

\begin{proof}
    To prove the lemma, we show that a suitably defined sequence of variables is a supermartingale and then apply Markov's inequality.
    The proofs of both parts follow the same lines.

    \vspace{5px}
    \noindent\textbf{Part \ref{it:bound more yes}.} Let $M_0 = 1$ and for each $t \in [m]$ define
    \[
        M_t = \exp\left(\sum_{i=1}^t \bigl(aY_iq_i - pa(1+a)q_i\bigr)\right).
    \]
    We note that for any $t \in [m]$ we have
    \[
        \mathbb{E}\Bigl[\exp(aY_tq_t) \mid \mathcal{F}_{t-1}\Bigr] = 1-p +p\exp(aq_t) \leq 1+pa(1+a)q_t \leq \exp\bigl(pa(1+a)q_t\bigr),
    \]
    where we used that $\exp(aq) \leq 1+aq +a^2q^2 \leq 1+a(1+a)q$ for any $q \in [0,1]$ and $a\in [0,1/2]$.
    In particular, we get that
    \[
        \mathbb{E}\left[M_t \mid \mathcal{F}_{t-1}\right] = M_{t-1} \cdot \exp\bigl(-pa(1+a)q_t \bigr)\cdot \mathbb{E}\bigl[\exp(aY_tq_t) \mid \mathcal{F}_{t-1}\bigr] \leq M_{t-1}.
    \]
    Hence, $\mathbb{E}[M_m] \leq \mathbb{E}[M_0] =1$ and so by Markov's inequality, for any $C \geq 0$, we get
    \[
        \Pr\left[\sum_{i=1}^m \left(aY_iq_i - pa(1+a)q_i\right) \geq C\right] = \Pr\left[M_m \geq e^C\right]\leq e^{-C}.
    \]

    \vspace{5px}
    \noindent\textbf{Part \ref{it:bound more no}.} The proof is very similar. Write $N_0 = 1$ and for each $t \in [m]$ define
    \[
        N_t = \exp\left(\sum_{i=1}^t  \bigl(pa(1-a)q_i - aY_iq_i  \bigr)\right).
    \]
    We notice that for any $t \in [m]$ we have
    \[
        \mathbb{E}\Bigl[\exp(-aY_tq_t) \mid \mathcal{F}_{t-1}\Bigr] = 1-p+p\exp(-aq_t) \leq 1 -pa(1-a)q_t \leq \exp\bigl(-pa(1-a)q_t\bigr),
    \]
    where we used that $\exp(-aq) \leq 1-aq + a^2q^2 \leq 1 -a(1-a)q$ for any $q \in [0,1]$ and $a \in [0,1/2]$.
    In particular,
    \[
        \mathbb{E}\bigl[N_t \mid \mathcal{F}_{t-1}\bigr] = N_{t-1}\cdot\exp\bigl(pa(1-a)q_t\bigr) \cdot  \mathbb{E}\bigl[\exp(-aY_tq_t) \mid \mathcal{F}_{t-1}\bigr] \leq N_{t-1}.
    \]
    Hence, $\mathbb{E}[N_m] \leq \mathbb{E}[N_0] = 1$ and so again by Markov's inequality for any $C \geq 0$ we get
    \[
        \Pr\left[\sum_{i=1}^m \bigl(aY_iq_i - pa(1-a)q_i\bigr) \leq -C \right] = \Pr[N_m \geq e^C] \leq e^{-C},
    \]
    as desired.
\end{proof}

\section{Exploring the connected component of a vertex}\label{section:component_of_a_vertex}
In this section, we want to understand the behaviour of the weight and size of the connected component $\mathcal{C}_v$ of $G_p$ containing a fixed vertex $v$.
We fix a weight vector $w \in [0,1]^{V(G)}$, which in our applications will either be the eigenvector corresponding to $\lambda(G)$ or the vector given by \cref{lemma:weight_subcritical}.
Then, in the subcritical regime, we can show that the probability that $w(\mathcal{C}_v) \geq C$ is exponentially small in $C$.
Similarly, for the supercritical regime the probability that $w(\mathcal{C}_v) \geq C$ but $|\mathcal{C}_v| \leq \gamma\lambda(G)$ is also exponentially small in $C$.
Finally, for the proof of \cref{thm:main avg degree}, we also show that if $\mathcal{C}_v$ is small, then it a.a.s.\ does not touch too many edges in $G$.
That is, removing $\mathcal{C}_v$ from $G$ will not decrease the average degree significantly.

The three estimates are collected in the following lemma.
\begin{lemma}[Component of a fixed vertex]\label{lemma:component_of_a_vertex}
    For each $0 < \varepsilon \leq 1$ there exists $\gamma > 0$ such that the following holds.
    Let $G$ be a graph, let $\lambda >0$ and let $w \in [0,1]^{V(G)}$.
    Let $0 < p \leq 1$, fix a vertex $v \in V(G)$ and let $\mathcal{C}_v$ denote the connected component of $G_p$ containing $v$.
    Then the following hold.
    \begin{enumerate}[label=(\alph*)]
        \item\label{it:subcritical sharpness} Suppose that $p \leq (1-\varepsilon)/\lambda$ and that $\sum_{y \in N(x)} w(y) \leq \lambda w(x)$ for all $x \in V(G)$.
        Then for all $C \geq 0$ we have
        \[
            \Pr\bigl[w(\mathcal{C}_v) \geq C\bigr] \leq \exp\left(1-\frac{\varepsilon^2C}{4}\right).
        \]
        \item\label{it:supercritical sharpness} Suppose that $p \geq (1+\varepsilon)/\lambda$ and that $\sum_{y \in N(x)} w(y) \geq \lambda w(x)$ for all $x \in V(G)$.
        Then for all $C \geq 0$ we have
        \[
            \Pr\Bigl[|\mathcal{C}_v| \leq \gamma\lambda \text{ and } w(\mathcal{C}_v) \geq C\Bigr] \leq \exp\left(-\frac{\varepsilon^2C}{18}\right).
        \]
        \item\label{it:not too many edges} For all $k \geq 6$ we have
        \[
            \Pr\left[|\mathcal{C}_v| \leq p^{-1} \text{ and } \left(\sum_{x \in \mathcal{C}_v} d_G(x)\right) \geq kp^{-2}\right] \leq \exp\left(-\frac{k}{8p}\right).
        \]
    \end{enumerate}
\end{lemma}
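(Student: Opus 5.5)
We reveal $\mathcal{C}_v$ with the edge-exploration process from \cref{section:proof outline} and feed the outcomes into \cref{lemma:tail_bounds}. We start with $\mathcal{C}=\{v\}$. At step $i$ we pick an unqueried edge $xy$ of $G$ with $x\in\mathcal{C}$ and $y\notin\mathcal{C}$, and query $Y_i\sim\mathrm{Ber}(p)$. If $Y_i=1$ we add $y$ to $\mathcal{C}$. The process stops when no such edge remains, and then $\mathcal{C}=\mathcal{C}_v$.

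We set $q_i=w(y_i)$ for parts (a) and (b), and $q_i=1$ for part (c). After the process stops, or after an auxiliary stopping time, we set $q_i=0$ and pad with dummy Bernoulli variables up to $m=e(G)$. Each $q_i$ is then $\mathcal{F}_{i-1}$-measurable. Two accounting facts drive everything:
\begin{itemize}
\item the ``yes'' weight $\sum_i Y_iq_i$ equals the weight of the vertices added, i.e.\ $w(\mathcal{C})-w(v)$;
\item every queried edge has its inner endpoint $x$ in the current component, and each edge is queried at most once.
\end{itemize}

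\textbf{Part (a).} Let $\tau$ be the first step at which $w(\mathcal{C})\geq C$, and stop querying there (set $q_i=0$ afterwards). On the event $w(\mathcal{C}_v)\geq C$ the time $\tau$ occurs, and the yes-weight up to $\tau$ is at least $C-1$. Every edge queried up to $\tau$ starts at a vertex of the component $\mathcal{C}'$ just before the last addition, which has $w(\mathcal{C}')<C$. By the upper eigen-inequality, the total queried weight is therefore
\[
Q\leq\sum_{x\in\mathcal{C}'}\sum_{y\in N(x)}w(y)\leq\lambda w(\mathcal{C}')\leq\lambda C.
\]
Now apply \cref{lemma:tail_bounds}(1) with $a=\varepsilon/2$. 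Since $p\lambda\leq 1-\varepsilon$ and $(1-\varepsilon)(1+\varepsilon/2)\leq 1-\varepsilon/2$, we get
\[
a\Bigl(\textstyle\sum_i Y_iq_i-p(1+a)Q\Bigr)\;\geq\;\frac{\varepsilon}{2}\Bigl(C-1-\bigl(1-\tfrac{\varepsilon}{2}\bigr)C\Bigr)\;\geq\;\frac{\varepsilon^2C}{4}-1.
\]
The lemma bounds the probability of this by $\exp(1-\varepsilon^2C/4)$.

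\textbf{Part (b).} We run the full process. On the event in question, when the process stops we have $|\mathcal{C}_v|\leq\gamma\lambda$, and every boundary edge of $\mathcal{C}_v$ has been queried and answered ``no''. By \cref{lemma:small_sets_expand}, the total queried weight satisfies $Q\geq(1-\gamma)\lambda w(\mathcal{C}_v)$, while the yes-weight is at most $w(\mathcal{C}_v)$. Apply \cref{lemma:tail_bounds}(2) with $a=\varepsilon/3$. Using $p\lambda\geq 1+\varepsilon$ and $(1+\varepsilon)(1-\varepsilon/3)\geq 1+\varepsilon/3$ for $\varepsilon\leq1$, we choose $\gamma=\gamma(\varepsilon)$ small enough that
\[
(1+\varepsilon)(1-\varepsilon/3)(1-\gamma)\geq 1+\varepsilon/6.
\]
Then $a\bigl(\sum_i Y_iq_i-p(1-a)Q\bigr)\leq-\frac{\varepsilon}{3}\cdot\frac{\varepsilon}{6}w(\mathcal{C}_v)\leq-\varepsilon^2C/18$. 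The lemma gives the bound $\exp(-\varepsilon^2C/18)$.

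\textbf{Part (c).} We take $q_i=1$ for the first $T=(k-1)p^{-2}$ steps, or until the process stops if that comes first. On the event, the number of edges touching $\mathcal{C}_v$ but leaving it is at least
\[
\sum_{x\in\mathcal{C}_v}d_G(x)-2e(\mathcal{C}_v)\geq kp^{-2}-|\mathcal{C}_v|^2\geq(k-1)p^{-2},
\]
and all of them are queried. So the process runs for at least $T$ steps, and among the first $T$ queries there are at most $|\mathcal{C}_v|-1<p^{-1}$ yeses. Apply \cref{lemma:tail_bounds}(2) with $a=1/2$:
\[
\tfrac12\bigl(p^{-1}-\tfrac{p}{2}T\bigr)=-\frac{k-3}{4p}\leq-\frac{k}{8p},
\]
where the last inequality holds precisely when $k\geq6$.

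The main obstacle is in part (a): the event $w(\mathcal{C}_v)\geq C$ must be converted into a bound on queried weight in terms of $C$ alone, which needs the stopping time $\tau$ together with the observation that queried edges emanate only from the component as it stood before the last addition. The remaining steps are bookkeeping, making sure every $q_i$ is predictable and lies in $[0,1]$.
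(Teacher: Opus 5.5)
Your proof is correct and follows the paper's argument. Both use the same edge-exploration process with $q_i=w(y_i)$ (respectively $q_i=1$), fed into the supermartingale tail bound: with $a=\varepsilon/2$ in (a), with $a$ of order $\varepsilon$ plus the small-sets-expand lemma in (b), and with $a=1/2$ in (c). Your choice $a=\varepsilon/3$ with a separately chosen $\gamma$ in (b), and the truncation at $T$ steps in (c), are cosmetic variations on the paper's $a=\gamma=\varepsilon/6$ and untruncated sums.

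The stopping time in (a) is valid but not the ``main obstacle'' you describe. Bounding the total queried weight directly by $\lambda w(\mathcal{C}_v)$ already gives the lower bound $\varepsilon^2 w(\mathcal{C}_v)/4-1$ for the martingale sum. This bound is increasing in $w(\mathcal{C}_v)$, so it is at least $\varepsilon^2C/4-1$ on the event $w(\mathcal{C}_v)\geq C$, and this is exactly what the paper does.
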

%\noindent We note that the second case of the lemma in particular implies that for a vertex $v$ with $w(v)=1$, the probability that $|\mathcal{C}_v| \geq \gamma\lambda$ is at least $1-\exp(-\varepsilon^2/18)$.\yuval{Does this sentence make sense here? It's good to include somewhere but I'm not sure here is best}

To prove \cref{lemma:component_of_a_vertex} we will explore $\mathcal{C}_v$ by revealing one edge of $G_p$ at a time.
More formally, we will analyse the following random process.
\begin{definition}[Exploration process]\label{definition:exploration_process}
    Let $G$ be a graph, fix $v \in V(G)$ and let $\prec$ be a total ordering of $E(G)$.
    Let $p \in [0, 1]$, write $m = |E(G)|$ and let $Y_1, \dots, Y_m$ be independent $\mathrm{Ber}(p)$ random variables.
    We let $C_0 = \{ v \}$ and initially mark all edges as unexposed.
    In each step $1 \leq i \leq m$, if there is no unexposed edge in $E(C_{i-1}, V(G) \setminus C_{i-1})$, we set $C_i=C_{i-1}$.
    Otherwise, we pick the $\prec$-smallest unexposed edge $x_iy_i \in E(C_{i-1}, V(G) \setminus C_{i-1})$ with $y_i \notin C_{i-1}$.
    If $Y_i = 1$, we set $C_i = C_{i-1} \cup \{y_i\}$, otherwise we set $C_i = C_{i-1}$.
    We then mark $x_iy_i$ as exposed and proceed to the next step.
\end{definition}
\noindent This exploration process is at the heart of our argument and, crucially, has the following two properties.
\begin{observation}
    In the exploration process, all edges in $E(C_m, V(G) \setminus C_m)$ are exposed, while no edges in $E(V(G)\setminus C_m)$ are exposed. 

    Additionally, the final set $C_m$ has the same distribution as $\mathcal C_v$, the component of $G_p$ containing \nolinebreak $v$.
    % \yuval{I rewrote this as an official observation and added a short (slightly sketchy) proof}
\end{observation}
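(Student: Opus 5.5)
The statement to prove is the Observation. It has two parts: a structural claim about which edges are exposed, and a distributional claim that $C_m$ has the law of $\mathcal{C}_v$. The plan is to handle them in that order.

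\textbf{Structural claim.} This follows from two invariants of the exploration process.

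First, every edge ever exposed lies in $E(C_{i-1}, V(G)\setminus C_{i-1})$ at the moment $i$ it is exposed. So it has an endpoint in $C_{i-1} \subseteq C_m$, because the sets $C_i$ are increasing. Hence no edge of $E(V(G)\setminus C_m)$ is ever exposed.

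Second, each step in which an unexposed boundary edge exists exposes exactly one new edge. There are only $m$ edges and $m$ steps, so after step $m$ either all edges are exposed, or at some step $i\le m$ there was no unexposed edge in $E(C_{i-1},V(G)\setminus C_{i-1})$. In the latter case the set is frozen from then on, so $C_m=C_{i-1}$ and every edge of $E(C_m,V(G)\setminus C_m)$ is exposed. In the former case the claim is trivial.

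\textbf{Distributional claim.} I would couple the process with $G_p$. Define the random subgraph $H$ as follows:
\begin{itemize}
\item an edge exposed at step $i$ is kept iff $Y_i=1$;
\item each unexposed edge is kept according to a fresh independent $\mathrm{Ber}(p)$ variable.
\end{itemize}
The edge examined at step $i$ is determined by $Y_1,\dots,Y_{i-1}$ and is distinct from all previously examined edges. Therefore, conditionally on the past, its indicator $Y_i$ is a fresh $\mathrm{Ber}(p)$ variable. Combined with the independent coins for unexposed edges, this shows that $H$ has the law of $G_p$. This is the standard fact for adaptive querying, and it is the step I regard as the only genuine point requiring care. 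I would justify it by checking that the probability of any fixed edge configuration factorizes as $p^{\#\text{kept}}(1-p)^{\#\text{not kept}}$: sum over the deterministic query sequence that this configuration induces.

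It then remains to show that $C_m$ equals the component of $v$ in $H$, which I do by two inclusions.

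\emph{Inclusion $C_m\subseteq$ component of $v$.} Every vertex added to $C_m$ is joined to an earlier vertex of $C_m$ by a kept edge. By induction on the step at which it is added, every vertex of $C_m$ lies in the component of $v$.

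\emph{Inclusion component of $v \subseteq C_m$.} By the structural claim, every edge of $H$ leaving $C_m$ would have been exposed with outcome $Y_i=0$, since outcome $1$ adds its endpoint $y_i$ to $C_m$. So no edge of $H$ leaves $C_m$. As $C_m$ contains $v$, it therefore contains the whole component of $v$.

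Together the two inclusions give $C_m=\mathcal{C}_v(H)$, which is distributed as $\mathcal{C}_v$.
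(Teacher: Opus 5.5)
Your proof is correct and follows the paper's argument: the structural claim comes directly from the definition of the process, and the distributional claim comes from coupling the exploration with $G_p$. The only difference is the direction of the coupling. The paper starts from the edge indicators of $G_p$ and sets $Y_i$ to the indicator of the edge queried at step $i$ (or to a fresh coin), whereas you build $H$ from the $Y_i$ plus fresh coins. Either way the same adaptive-querying independence is needed; you justify it explicitly, which the paper does not, and you also spell out both inclusions $C_m\subseteq\mathcal{C}_v(H)$ and $\mathcal{C}_v(H)\subseteq C_m$ that the paper leaves implicit.
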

\begin{proof}
    The first statement follows immediately from the definition of the exploration process, as an edge is only exposed if, at some point $i$ in the process, it has exactly one endpoint in $C_{i-1}$ and one endpoint outside of $C_{i-1}$. Moreover, for the exploration process to terminate, all edges leaving $C_m$ must have been queried. 

    For the second statement, we couple $G_p$ and the exploration process as follows. Let $\{X_e:e \in E(G)\}$ be the indicator random variables for the event that $e \in E(G_p)$. For each $i \in [m]$, where $m=\ab{E(G)}$, let $Y_i=X_{x_i y_i}$ if $x_i y_i$ is the edge explored at step $i$, and otherwise let $Y_i$ be an independent Ber$(p)$ random variable. As $C_m$ is revealed to be the component of $v$ in the exploration process, we see that under this coupling, $C_m=\mathcal C_v$.
    % For the second statement, once the exploration process has ended, let us keep each unexposed edge independently with probability $p$. By doing so, we simply sample $G_p$. By the first statement, the exploration process has already revealed that $C_m$ spans a connected subgraph of $G_p$, while also revealing that there is no edge of $G_p$ joining $C_m$ to $V(G) \setminus C_m$, hence $C_m$ is precisely the component of $v$ in this outcome of $G_p$, proving the second statement.
\end{proof}
% First, since each edge of $G$ queried throughout the process is considered present with probability $p$ independently of all other edges, the final set $C_m$ is distributed as $\mathcal{C}_v$, the component of $G_p$ containing $v$. Second, the exploration process exposes all edges in $E(C_{m}, V(G) \setminus C_{m})$ and some edges in $E(C_m)$ while no edges in $E(V(G)\setminus C_{m})$ get exposed. \yuval{Should we dwell more on this point? It's basically obvious but a bit difficult to formally justify, and it's crucial for the remainder of the argument. \textbf{MC:} A added a little bit more, if u have more to say it woudl certainly be good to add.}

As discussed before, in the proof of \cref{lemma:component_of_a_vertex} we want to argue that on the respective events we see much more (or much less) weight on the ``noes" than we would expect, which by \cref{lemma:tail_bounds} is very unlikely.

\begin{proof}[Proof of \cref{lemma:component_of_a_vertex}]
    Let $m = |E(G)|$, pick an arbitrary ordering $\prec$ of $E(G)$ and let $Y_1, \dots, Y_m$ and $C_1, \dots, C_m$ be defined as in \cref{definition:exploration_process}.
    For each $i \in [m]$, we let $q_i = w(y_i)$ if at step $i$ we have considered an unexposed edge $x_iy_i$ and we set $q_i = 0$ otherwise.
    Note that $q_i$ is a $\sigma(Y_1, \dots, Y_{i-1})$-measurable random variable that takes values in $[0,1]$.
    Additionally, $\mathcal{C}_v$ and $C_m$ are identically distributed, so it is enough to consider $C_m$.
    The proofs of the three parts follow similar lines.
    
    \vspace{5px}
    \noindent{\textbf{Part \ref{it:subcritical sharpness}}}
    We want to argue that on the event that $w(\mathcal{C}_v) \geq C$ we have seen much more weight on the ``yeses'' than expected, which is very unlikely by \cref{lemma:tail_bounds}.
    To that end, let $a = \varepsilon/2$ and notice that all the edges we have exposed during the exploration process have at least one endpoint in $C_m$.
    In particular, the total weight queried is $\sum_{i = 1}^m q_i \leq \sum_{x \in C_m}\sum_{y \in N(x)} w(y)$.
    On the other hand, the weight of the ``yeses'' is $\sum_{i=1}^m Y_iq_i = w(C_m) - w(v) \geq w(C_m) - 1$.
    Therefore,
    \begin{align*}
        \sum_{i=1}^m\left( aY_iq_i - pa(1+a)q_i\right) &\geq a\bigl(w(C_m) - 1\bigr) - pa(1+a)\Big(\sum_{x \in C_m}\sum_{y \in N(x)} w(y)\Bigr)\\
        &\geq \frac{\varepsilon}{2}\Bigl(w(C_m)-1\Bigr)-\frac{\varepsilon}{2}\Bigl(1+\frac{\varepsilon}{2}\Bigr)\Bigl(1-\varepsilon\Bigr)w(C_m)\\
        &\geq\frac{\varepsilon^2w(C_m)}{4}  - 1.
    \end{align*}
    Hence, by \cref{lemma:tail_bounds} we get that
    % \yuval{GPT complains that the final step is unjustified if $\varepsilon^2C/4-1$ is negative, since \cref{lemma:tail_bounds} requires $C \geq 0$. But the bound is trivial in that case so I'm not sure we care}
    \[
        \Pr\Bigl[w(C_m) \geq C\Bigr] \leq \Pr\left[ \sum_{i=1}^m\bigl( aY_iq_i - pa(1+a)q_i\bigr) \geq \frac{\varepsilon^2C}{4} - 1\right] \leq \exp\left(1 - \frac{\varepsilon^2C}{4}\right).
    \]
    
    \vspace{5px}
    \noindent{\textbf{Part \ref{it:supercritical sharpness}}}
    Here, we in turn want to argue that on the event $|\mathcal{C}_v|\leq \gamma\lambda$ and $w(\mathcal{C}_v) \geq C$, the total weight of the ``yeses'' during the exploration must have been less than we expected.
    To that end, set $\gamma = \varepsilon/6$ and notice that during the exploration process we have exposed all the edges in $E(C_m, V(G) \setminus C_m)$.
    In particular, by \cref{lemma:small_sets_expand}, on the event that $|C_m| \leq \gamma\lambda$ the total weight explored during the process is
    \begin{align*}
        \sum_{i=1}^m q_i \geq \sum_{x \in C_m}\sum_{y \in N(x) \setminus C_m } w(y) 
        %\\
        %&= \sum_{x \in C_m} \left( \sum_{y \in N(x)} w(y)-\sum_{y \in N(x)\cap C_m} w(y)
        %\right)\\
        %&\geq \lambda w(C_m)-\ab{C_m}w(C_m)\\
        \geq \left(1-\gamma\right)\lambda w(C_m).
    \end{align*}
    On the other hand, the weight of the ``yeses'' is $\sum_{i=1}^m Y_i q_i = w(C_m) - w(v) \leq w(C_m)$.
    In particular, on the event that $w(C_m) \geq C$ and $|C_m| \leq \gamma \lambda$ we have
    \[
        \sum_{i = 1}^m\bigl( \gamma Y_iq_i - p\gamma(1-\gamma)q_i\bigr) \leq \gamma\Bigl(1- (1+\varepsilon)\bigl(1-\gamma\bigr)^2\Bigr)w(C_m) \leq -\frac{\varepsilon^2C}{18}
    \]
    Then, by \cref{lemma:tail_bounds} with $a=\gamma$, we get
    \[
        \Pr\left[ |C_m| \leq\gamma\lambda\text{ and }w(C_m) \geq C \right] \leq \Pr\Bigl[\sum_{i = 1}^m\bigl( \gamma Y_iq_i - p\gamma(1-\gamma)q_i \bigr) \leq -\frac{\varepsilon^2C}{18}\Bigr] \leq \exp\left(-\frac{\varepsilon^2C}{18}\right),
    \]
    which proves the statement.

    \vspace{5px}
    \noindent{\textbf{Part \ref{it:not too many edges}}} In this part, we want to make a similar argument to Part \ref{it:supercritical sharpness} and use that in our event we must have seen many fewer  ``yeses'' than expected.
    To capture the size instead of the weight of $C_m$, for each $i \in [m]$ let $q_i' = 1$ if an unexposed edge was considered at the $i$th step of the exploration process and set $q_i' = 0$ otherwise.
    We note that $q_i'$ is $\sigma(Y_1, \dots, Y_{i-1})$-measurable and takes values in $[0,1]$.
    
    Now, recall that during the exploration process we must have queried all edges in $E(C_m, V(G)\setminus C_m)$.
    In particular, on the event that $|C_m| \leq p^{-1}$ and $\sum_{x \in C_m} d_G(x) \geq kp^{-2}$, the total number of queries during the exploration is
    \[
        \sum_{i=1}^m q_i' \geq \ab{E(C_m, V(G) \setminus C_m)}\geq  \sum_{x \in C_m}\bigl(d_G(x) - |C_m|\bigr) = \left(\sum_{x \in C_m} d_G(x) \right) - |C_m|^2 \geq (k-1)p^{-2}.
    \]
    On the other hand, the number of successful queries is $\sum_{i=1}^mY_iq_i' \leq |C_m|\leq p^{-1}$.
    Therefore,
    \[
        \sum_{i = 1}^m \left(\frac{Y_iq_i'}{2} -\frac{pq_i'}{4}\right) \leq \frac{1}{2p} - \frac{k-1}{4p} \leq -\frac{k}{8p},
    \]
    where we use $k\geq 6$ in the last inequality.
    By applying \cref{lemma:tail_bounds} with $a=1/2$, we get
    \[
        \Pr\left[|\mathcal{C}_v| \leq p^{-1} \text{ and } \left(\sum_{x \in \mathcal{C}_v} d_G(x)\right) \geq kp^{-2}\right] \leq \Pr\left[\sum_{i = 1}^m \left( \frac{Y_iq_i'}{2} - \frac{pq_i'}{4}\right) \leq -\frac{k}{8p}\right] \leq \exp\left(-\frac{k}{8p}\right),
    \]
    which concludes the proof of the lemma.
\end{proof}

\section{Proofs of the main results}\label{section:proofs of theorems}
In this section, we use the results from \cref{section:component_of_a_vertex} to deduce \cref{thm:main avg degree} and \cref{thm:main large lambda}.
Then, we will prove \cref{thm:main lambda} by combining \cref{thm:main large lambda} with a separate argument for the case when $\lambda(G) = O(\sqrt{\Delta(G)})$. To avoid confusion between random and deterministic objects, we often use bold letters to denote random objects in this section.

\subsection{Proof of Theorem~\ref{thm:main large lambda}}
We begin with the proof of \cref{thm:main large lambda}, starting with the supercritical regime.
That is, we have a graph $G$ with $\Delta(G) \leq \lambda(G)^2/K^2$ and want to prove that with probability $1-\exp(-cK)$, $G_p$ contains a component of size $\Omega(\lambda(G))$ if $p \geq (1+\varepsilon)/\lambda(G)$.
For that, we let $w \in [0,1]^{V(G)}$ be the normalized eigenvector corresponding to $\lambda(G)$ such that $w(v)=1$ for some $v \in V(G)$.
For this $v$, by \cref{lemma:component_of_a_vertex} we know that with constant probability the connected component of $G_p$ containing $v$ has size at least $\gamma\lambda$ and that if not, then a.a.s.\ $w(\mathcal{C}_v)$ is small.
In the latter case, we would like to repeat the argument inside $G - \mathcal{C}_v$ and argue that for some other $v'$ we can find a large connected component with constant probability.
For that, we use that $\Delta(G) \leq \lambda(G)^2/K^2$, and so by  \cref{lemma:spectral_radius_stability} $\lambda(G-\mathcal{C}_v)$ won't decrease much compared to $\lambda(G)$.
We are able to iterate this argument $\Omega(K)$ times, so the probability of not having found a large component throughout is $\exp(-\Omega(K))$.

\begin{proof}[Proof of Theorem~\ref{thm:main large lambda}\ref{it:large lambda supercritical}]
    Given $\varepsilon>0$, let $\gamma'$ be as defined by \cref{lemma:component_of_a_vertex} applied with $\varepsilon/2$, and set $\gamma = \gamma'/2$. Let $G$ be a graph, $p\geq (1+\varepsilon)/\lambda(G)$ and let $K$ be such that $\Delta(G)\leq\lambda(G)^2/K^2$. Let $\mathcal{E}^{\mathrm{nogiant}}$ be the event that $G_p$ contains no connected component of size at least $\gamma\lambda(G)$.
    We want to show that
    \[
        \Pr[\mathcal{E}^{\mathrm{nogiant}}] \leq \exp(-cK),
    \]
    where $c$ is a constant depending only on $\varepsilon$.

    We first argue that we may assume that $K$ is sufficiently large compared to $\varepsilon$.
    To that end, let $w \in [0,1]^{V(G)}$ be an eigenvector of $G$ corresponding to $\lambda(G)$ such that $w(v) = 1$ for some $v \in V(G)$.
    Let $\mathbf{\mathcal{C}}_v$ denote the connected component of $G_p$ containing $v$.
    Then, by \cref{lemma:component_of_a_vertex}\ref{it:supercritical sharpness} applied with $C=1$, noticing that with probability $1$ we have $w(\mathbf{\mathcal{C}}_v) \geq 1$, we immediately get that $\Pr[|\mathbf{\mathcal{C}}_v| \leq \gamma\lambda(G)] \leq \exp(-\varepsilon^2/18) \leq \exp(-cK)$, whenever $c \leq \varepsilon^2/(18K)$. In particular, by choosing $c$ sufficiently small with respect to $\varepsilon$, we may assume that $K$ is sufficiently large.

    So we assume that $K$ is large enough and consider the following random process, which we initialize with $\mathbf{S}_1 = \emptyset$.
    We let $T = \sqrt{\varepsilon}K/8$, which will be the maximum number of steps we make.
    In each step $t \in [T]$, we let $\mathbf{w}_t \in [0,1]^{V(G) \setminus \mathbf{S}_t}$ be the eigenvector of $G-\mathbf{S}_t$ corresponding to the eigenvalue $\lambda(G - \mathbf{S}_t)$ such that for some $\mathbf{v}_t \in V(G) \setminus \mathbf{S}_t$ we have $\mathbf{w}_t(\mathbf{v}_t) =1$.
    We then reveal the connected component $\mathbf{C}_t$ of $G_p$ containing $\mathbf{v}_t$.
    If $|\mathbf{C}_t| \geq \gamma\lambda(G)$, we terminate the process with success.
    Otherwise, if $\mathbf{w}_t(\mathbf{C}_t) \geq \sqrt{\varepsilon}K/8$, we terminate the process with failure.
    Finally, if neither of the two holds, then we let $\mathbf{S}_{t+1} = \mathbf{S}_t \cup \mathbf{C}_t$ and continue.

    To analyse the process, for each $t \in \{0,1, \dots, T\}$ we let $\mathcal{E}_t^{\mathrm{nowin}}$ and $\mathcal{E}_t^{\mathrm{nofail}}$ denote the events that the process has not terminated with success or failure, respectively, up to and including step $t$.
    We note that $\mathcal{E}_t^{\mathrm{nowin}}, \mathcal{E}_t^{\mathrm{nofail}} \in \sigma(\mathbf{C}_1, \dots, \mathbf{C}_t)$ and that we have $\Pr[\mathcal{E}^{\mathrm{nogiant}}] \leq \Pr[\mathcal{E}_T^{\mathrm{nowin}}]$. Therefore, it suffices to upper-bound $\Pr[\mathcal{E}_T^{\mathrm{nowin}}]$.

    We first want to understand the probability of success or failure at step $t+1$ given that the process has not terminated in the first $t$ steps.
    For that, we fix $t \in \{0,1, \dots, T-1\}$, let $C_1, \dots, C_t \subseteq  V(G)$ and let $\mathcal E$ be the event that $(\mathbf{C}_1, \dots, \mathbf{C}_t) = (C_1, \dots, C_t)$.
    We suppose that $\mathcal E$ holds with positive probability and that $\mathcal E \subseteq \mathcal{E}_t^{\mathrm{nowin}} \cap \mathcal{E}_t^{\mathrm{nofail}}$.
    For each $1 \leq t' \leq t+1$ we let $S_{t'} = \bigcup_{i=1}^{t'-1} C_i$ and let $w_{t'} \in [0,1]^{V(G) \setminus S_{t'}}$ be the eigenvector of $G - S_{t'}$ corresponding to $\lambda(G- S_{t'})$ with $\lVert w_{t'}\rVert_{\infty} =1$.

    We now argue that the eigenvalue $\lambda(G -S_{t+1})$ has not decreased significantly compared to $\lambda(G)$.
    More specifically, we claim that for each $1 \leq t' \leq t+1$ we have $\lambda(G-S_{t'}) \geq (1-(t'-1)\sqrt{\varepsilon}/K)\lambda(G)$, which is certainly true for $t' =1$.
    Assume therefore that this holds for some $t' \leq t$.
    We then have $\lambda(G - S_{t'}) \geq \lambda(G)/2 \geq (K/2)\sqrt{\Delta(G-S_{t'})}$ and $w_{t'}(C_{t'}) < \sqrt{\varepsilon}K/8$, since $\mathcal E \subseteq \mathcal{E}_t^{\mathrm{nowin}} \cap \mathcal{E}_t^{\mathrm{nofail}}$.
    Therefore, by \cref{lemma:spectral_radius_stability} we get
    \[
        \lambda(G - S_{t'+1}) \geq \left( 1- \frac{2w_{t'}(C_{t'})}{K^2/4} \right) \lambda(G-S_{t'}) \geq \left( 1-\frac{\sqrt \varepsilon}{K}\right) \lambda(G-S_{t'})\geq\left( 1-\frac{t'\sqrt{\varepsilon}}{K} \right)\lambda(G),
    \]
    where the second inequality uses that $\mathcal E \subseteq \mathcal E_t^{\mathrm{nofail}}$ and the final inequality uses that the claimed bound holds for $t'$.

    We can now apply \cref{lemma:component_of_a_vertex} to bound the probability of success and of failure at step $t+1$.
    To that end, we first notice that conditioned on the event $\mathcal E$ the graph $G_p - S_{t+1}$ is distributed as $(G - S_{t+1})_p$.
    Moreover, by the above considerations we have $p \geq (1+\varepsilon/2)/\lambda(G-S_{t+1})$ and $\lambda(G-S_{t+1}) \geq \lambda(G)/2$.
    Since $w_{t+1}(\mathbf{C}_{t+1}) \geq w_{t+1}(\mathbf{v}_{t+1}) = 1$ with probability $1$, by part \ref{it:supercritical sharpness} of \cref{lemma:component_of_a_vertex} applied with $\varepsilon/2$ and $C = 1$, we get
    \[
        \Pr\left[ \mathcal{E}_{t+1}^{\mathrm{nowin}} \mid \mathcal E\right] \leq \Pr\left[|\mathbf{C}_{t+1}| \leq 2\gamma\lambda(G-S_{t+1})\mid \mathcal E\right] \leq \exp\left( -\frac{\varepsilon^2}{72}\right).
    \]
    Similarly, again by part \ref{it:supercritical sharpness} of \cref{lemma:component_of_a_vertex}, but with $C = \sqrt{\varepsilon}K/8$, we get
    \[
        \Pr\left[ \neg \mathcal{E}_{t+1}^{\mathrm{nofail}} \mid \mathcal E\right] \leq \Pr\left[|\mathbf{C}_{t+1}| \leq 2\gamma\lambda(G-S_{t+1}) \text{ and } w_{t+1}(\mathbf{C}_{t+1}) \geq \frac{\sqrt{\varepsilon}K}{8} \mid \mathcal E\right] \leq \exp \left( - \frac{\varepsilon^{5/2}K}{576} \right).
    \]
    Finally, we notice that since we can partition $\mathcal{E}_t^{\mathrm{nowin}} \cap \mathcal{E}_t^{\mathrm{nofail}}$ by such events $\mathcal E$, we get the same bounds on $\Pr[\mathcal{E}_{t+1}^{\mathrm{nowin}}\mid \mathcal{E}_t^{\mathrm{nowin}} \cap \mathcal{E}_t^{\mathrm{nofail}}]$ and $\Pr[\neg\mathcal{E}_{t+1}^{\mathrm{nofail}}\mid \mathcal{E}_t^{\mathrm{nowin}} \cap \mathcal{E}_t^{\mathrm{nofail}}]$ respectively.

    To conclude the proof, we note that for the event $\mathcal{E}_T^{\mathrm{nowin}}$ to hold, the process must have either terminated with failure at some step $t<T$ or run all the way up until the end without ever terminating with success.
    Therefore,
    \begin{align*}
        \Pr[\mathcal{E}_T^{\mathrm{nowin}}] &\leq \left( \prod_{t=0}^{T-1}  \Pr\left[\mathcal{E}_{t+1}^{\mathrm{nowin}}\mid \mathcal{E}_t^{\mathrm{nowin}} \cap \mathcal{E}_t^{\mathrm{nofail}}\right]\right)+\left(\sum_{t=0}^{T-2} \Pr\left[\neg\mathcal{E}_{t+1}^{\mathrm{nofail}} \mid \mathcal{E}_t^{\mathrm{nowin}} \cap \mathcal{E}_t^{\mathrm{nofail}}\right] \right) \\
        &\leq \exp \left( -\frac{\varepsilon^2}{72}\cdot \frac{\sqrt{\varepsilon}K}{8} \right) +\frac{\sqrt{\varepsilon}K}{8}\cdot \exp\left( - \frac{\varepsilon^{5/2}K}{576} \right)\\
        &\leq \exp\left( -cK \right),
    \end{align*}
    for some constant $c$ depending on $\varepsilon$, where we used that $K$ is sufficiently large compared to $\varepsilon$.
    % \yuval{GPT points out that this is still wrong, e.g.\ if $\varepsilon=1$ and $K=9$ then the penultimate quantity is $>1$}
    % \yuval{GPT points out that the final step is only true if $K$ is sufficiently large wrt $\varepsilon$. But if it's not then I think that we only need to apply \cref{lemma:component_of_a_vertex}\ref{it:subcritical sharpness} once, and therefore the second term is irrelevant and things are again OK. So I'm not really sure anything needs to be changed but perhaps we should mention addressing small $K$?}
\end{proof}

Turning our attention to the subcritical phase, we would like to use \cref{lemma:component_of_a_vertex} to argue the probability that the component of a fixed vertex is large is very small and then union bound over all vertices of our graph.
Naively, we would take $w$ to again be the eigenvector of $G$ corresponding to $\lambda(G)$ and immediately get that  a.a.s.\ there is no connected component $\mathcal C$ in $G_p$ with $w(\mathcal C) = \Omega( \log {|G|})$.
As discussed earlier, this however doesn't imply that there is no connected component of large size.
To get around this issue, we will instead take $w$ to be the vector from \cref{lemma:weight_subcritical}, which will allow us to control the size of any set in terms of its weight.

\begin{proof}[Proof of Theorem~\ref{thm:main large lambda}\ref{it:large lambda subcritical}]
    Let $0 < \varepsilon \leq 1$, let $G$ be a graph and fix $p \leq (1-\varepsilon)/\lambda(G)$.
    Let $L = 500/\varepsilon^3$ and $\lambda = (1+\varepsilon/4)\lambda(G)$.
    By \cref{lemma:weight_subcritical} applied with parameter $\varepsilon/4$ there exists a vector $w \in [0,1]^{V(G)}$ such that for each $x \in V(G)$, we have 
    \[
        w(x) \geq \frac{\varepsilon\lambda(G)}{8 \sqrt{\ab G \Delta(G)}} \hspace{10px} \text{and} \hspace{10px} \sum_{y \in N(x)} w(y) \leq \lambda w(x).
    \]

    We first want to bound the probability that the component of a fixed vertex $v\in V(G)$ is large and later apply the union bound over all $v \in V(G)$.
    To that end, let $\mathcal C_v$ denote the connected component of $G_p$ containing $v$.
    Notice that $p \leq (1-\varepsilon/2)/\lambda$ and that for any $S \subseteq V(G)$ we have $w(S) \geq |S|\cdot \frac{\varepsilon \lambda(G)}{8 \sqrt{\ab{G} \Delta(G)}} \geq |S|\frac{\varepsilon K}{8\sqrt{\ab{G}}}$.
    Therefore, by \cref{lemma:component_of_a_vertex}\ref{it:subcritical sharpness}, we get that
    \[
        \Pr\left[|\mathcal C_v| \geq L\frac{\sqrt{\ab{G}} \log {\ab{G}}}{K}\right] \leq \Pr\left[w(\mathcal C_v) \geq  \frac{\varepsilon L \log {\ab{G}} }{8}\right] \leq \exp \left(1-\frac{\varepsilon^3 L \log{\ab G}}{128}\right)\leq \ab{G}^{-3},
    \]
    where we used our choice of $L$.
    Hence, by union bound over all $v \in V(G)$ we get
    \[
        \Pr\left[\text{$G_p$ has a component of order at least $L\frac{\sqrt{\ab{G}} \log {\ab{G}}}{K}$}\right] \leq \ab{G} \cdot \ab{G}^{-3} = o(1)
    \]
    as $\ab{G} \to \infty$.
\end{proof}

\subsection{Proof of Theorem~\ref{thm:main avg degree}}
We now want to prove \cref{thm:main avg degree}, i.e., if a graph $G$ has average degree $d$ and $p \geq (1+\varepsilon)/d$, then a.a.s.\ $G_p$ contains a component of size $\Omega(d)$.
We split the proof into two regimes.
Firstly, if $d \geq \ab{G}^{3/4}$, then $\lambda(G) \geq d \geq \ab{G}^{1/4}\sqrt{\Delta(G)}$, as clearly $\Delta(G) \leq \ab{G}$.
Therefore, by \cref{thm:main large lambda} we immediately get that a.a.s.\ $G_p$ contains a component of size $\Omega(\lambda(G)) = \Omega(d)$.

What remains is thus the case when $d \leq \ab{G}^{3/4}$.
Here, we do a similar argument as in the proof of \cref{thm:main large lambda}\ref{it:large lambda supercritical}.
That is, we repeatedly reveal a connected component of a vertex and remove all of it from the graph, until we (hopefully) find one which has size $\Omega(d)$.
While it is no longer true that throughout the process the spectral radius of the graph does not decrease significantly, we are able to argue that the average degree a.a.s.\ does not. Towards this, we show that every small component a.a.s.\ touches only a few edges.
% \yuval{GPT points out that this isn't really true, it touches $Td^2$ edges for some $T$ depending on $n,d$. Do we care? \textbf{MC: }I changed it, see if you prefer}

%at most $O(d^2)$ edges and combine this observation with $d \leq \ab G^{3/4}$.

\begin{proof}[Proof of Theorem~\ref{thm:main avg degree}]
    Fix $0 < \varepsilon \leq 1$, let $G$ be a graph with average degree $d$ and let $n =|G|$ be large enough such that $\sqrt{\varepsilon n^{1/4}/16} \geq 6$.
    Let\footnote{For any $p\leq p'$, we may couple $G_p$ and $G_{p'}$ so that $G_p \subseteq G_{p'}$ with probability $1$, hence it suffices to prove the theorem for $p=(1+\varepsilon)/d$.} $p = (1+\varepsilon)/d$ and let $\gamma'$ be the constant from \cref{lemma:component_of_a_vertex} with parameter $\varepsilon/2$.
    Finally, let $\gamma = \gamma'/2$ and let $\mathcal{E}^{\mathrm{nogiant}}$ denote the event that $G_p$ contains no component of size at least $\gamma d$.
    We will show that
    \[
        \Pr[\mathcal{E}^{\mathrm{nogiant}}] \leq \exp\bigl(-c_\varepsilon n^{1/8} \bigr),
    \]
    for some constant $c_\varepsilon$ depending only on $\varepsilon$.
    In particular, we get $\Pr[\mathcal{E}^{\mathrm{nogiant}}] = o(1)$ as $n \to \infty$.

    We first note that if $d \geq n^{3/4}$, then
    \[
        \frac{\lambda(G)}{\sqrt{\Delta(G)}} \geq \frac{d}{\sqrt{n}} \geq n^{1/4}.
    \]
    Therefore, in this case by \cref{thm:main large lambda} the probability of $\mathcal{E}^{\mathrm{nogiant}}$ is at most $\exp(-c_{\varepsilon}n^{1/4})$, for some constant $c_{\varepsilon}$ depending only on $\varepsilon$.
    Thus, it remains to prove the statement assuming that $d < n^{3/4}$. 
    
    In this regime, we consider the following random process, which we initialize with $\mathbf{S}_1 = \emptyset$. 
    Let $T = \sqrt{\varepsilon n/(16d)} \geq 6$, which denotes the maximum number of steps we make.
    In each step $t \in [T]$, we let $\mathbf{w}_t \in [0, 1]^{V(G) \setminus \mathbf{S}_t}$ be the eigenvector of $G - \mathbf{S}_t$ corresponding to the eigenvalue $\lambda(G-\mathbf{S}_t)$ such that for some $\mathbf{v}_t \in V(G) \setminus \mathbf{S}_t$ we have $\mathbf{w}_t(\mathbf{v}_t) = 1$.
    We reveal the connected component $\mathbf{C}_t$ of $G_p$ containing $\mathbf{v}_t$.
    If $|\mathbf{C}_t| \geq \gamma d$, we terminate the process with success.
    Otherwise, if $\sum_{x \in \mathbf{C}_t} d_{G - \mathbf{S}_t}(x) \geq Td^2$,
    we terminate the process with failure.
    Finally, if neither of the two hold, then set $\mathbf{S}_{t+1} = \mathbf{S}_t \cup \mathbf{C}_t$ and continue.

    To analyse the process, for each $t \in \{0, 1, \dots, T\}$ we let $\mathcal{E}_t^{\mathrm{nowin}}$ and $\mathcal{E}_t^{\mathrm{nofail}}$ denote the events that the process has not terminated with success or failure, respectively, up until and including step $t$.
    We note that $\mathcal{E}_t^{\mathrm{nowin}}, \mathcal{E}_t^{\mathrm{nofail}} \in \sigma(\mathbf{C}_1, \dots, \mathbf{C}_{t})$ and that we have $\Pr[\mathcal{E}^{\mathrm{nogiant}}] \leq \Pr[\mathcal{E}_T^{\mathrm{nowin}}]$.

    We first want to understand the probabilities of success and failure at any given step given that the process has not terminated before this step.
    For that, we fix $t \in \{0, 1, \dots, T-1\}$, let $C_1, \dots, C_t \subseteq V(G)$ and let $\mathcal E$ be the event that $(\mathbf{C}_1, \dots, \mathbf{C}_t) = (C_1, \dots, C_t)$.
    Suppose that $\mathcal E$ holds with positive probability and that $\mathcal E \subseteq \mathcal{E}_t^{\mathrm{nowin}} \cap \mathcal{E}_t^{\mathrm{nofail}}$.
    Define moreover $S_{t'} = \bigcup_{i=1}^{t'-1} C_i$ for each $1 \leq t' \leq t+1$.
    
    We first argue that the average degree of $G - S_{t+1}$ is still roughly $d$.
    Indeed, we have
    \[
        e(G - S_{t+1}) \geq e(G) - \sum_{t'=1}^{t} \sum_{x \in C_{t'}} d_{G-S_{t'}}(x) \geq nd/2 - t Td^2 \geq (1-\varepsilon/8)\cdot nd/2,
    \]
    where we used that $\mathcal E \subseteq \mathcal{E}_t^{\mathrm{nofail}}$ and that $t   \leq T$.
    Consequently, $d(G - S_{t+1}) \geq (1-\varepsilon/8)d$.

    Now, we can apply \cref{lemma:component_of_a_vertex} to bound the probabilities of success and failure at step $t+1$.
    To that end, we first notice that conditioned on the event $\mathcal E$ the graph $G_p - S_{t+1}$ is distributed as $(G - S_{t+1})_p$.
    Moreover, by the above considerations we have $\lambda(G-S_{t+1}) \geq d(G-S_{t+1}) \geq (1-\varepsilon/8)d$ and so $p \geq (1+\varepsilon/2)/\lambda(G - S_{t+1})$.
    Note further that $\mathbf{w}_{t+1}(\mathbf{C}_{t+1}) \geq \mathbf{w}_{t+1}(\mathbf{v}_{t+1}) = 1$ with probability $1$.
    Therefore, by  \cref{lemma:component_of_a_vertex}\ref{it:supercritical sharpness} applied with $\varepsilon/2$ and $C = 1$ we get
    \[
        \Pr\Bigl[\mathcal{E}_{t+1}^{\mathrm{nowin}} \mid \mathcal E\Bigr] \leq \Pr\Bigl[|\mathbf{C}_{t+1}| \leq \gamma'\lambda(G- S_{t+1}) \mid \mathcal E\Bigr] \leq \exp\left(-\frac{\varepsilon^2}{72}\right).
    \]
    Similarly, by \cref{lemma:component_of_a_vertex}\ref{it:not too many edges} applied with $k = T \geq 6$ we get
    \[
        \Pr\Bigl[\neg\mathcal{E}_{t+1}^{\mathrm{nofail}} \mid \mathcal E\Bigr] \leq \Pr\left[|\mathbf{C}_{t+1}| \leq p^{-1} \text{ and } \left( \sum_{x \in \mathbf{C}_{t+1}} d_{G-S_{t+1}}(x) \right) \geq Td^2\mid \mathcal E\right] \leq \exp\left(-\frac{Td}{32}\right).
    \]
    Finally, we notice that since we can partition $\mathcal{E}_t^{\mathrm{nowin}} \cap \mathcal{E}_t^{\mathrm{nofail}}$ by such events $\mathcal E$, we get the same bounds on $\Pr\bigl[\mathcal{E}_{t+1}^{\mathrm{nowin}}| \mathcal{E}_t^{\mathrm{nowin}} \cap \mathcal{E}_t^{\mathrm{nofail}}\bigr]$ and $ \Pr\bigl[\neg \mathcal{E}_{t+1}^{\mathrm{nofail}}| \mathcal{E}_t^{\mathrm{nowin}} \cap \mathcal{E}_t^{\mathrm{nofail}}\bigr]$ respectively.

    To conclude the proof, we note that for the event $\mathcal{E}_T^{\mathrm{nowin}}$ to hold, the process must have either terminated with failure at some step or run all the way up until the end without ever terminating with success.
    Therefore, using that $d \leq n^{3/4}$ we get
    \begin{align*}
        \Pr[\mathcal{E}^{\mathrm{nogiant}}] &\leq \left(\prod_{t=1}^T \Pr\left[\mathcal{E}_{t}^{\mathrm{nowin}} \mid \mathcal{E}_{t-1}^{\mathrm{nofail}} \cap \mathcal{E}_{t-1}^{\mathrm{nowin}}\right] \right)+\left(\sum_{t=1}^T \Pr\left[\neg \mathcal{E}_{t}^{\mathrm{nofail}} \mid \mathcal{E}_{t-1}^{\mathrm{nofail}} \cap \mathcal{E}_{t-1}^{\mathrm{nowin}}\right]\right)\\
        &\leq\exp\left(\left(-\frac{\varepsilon^2}{72}\right) \cdot \sqrt{\frac{\varepsilon n}{16d}} \right)+ \left(\sqrt{\frac{\varepsilon n}{16d}}\right) \cdot \exp\left(-\sqrt{\frac{\varepsilon n}{16d}} \cdot \frac{d}{32}\right)\\
        &\leq \exp\Bigl(-c_{\varepsilon}n^{1/8}\Bigr),
    \end{align*}
    for some constant $c_\varepsilon$ depending only on $\varepsilon$.
\end{proof}

\subsection{Proof of Theorem~\ref{thm:main lambda}}
We now turn to proving \cref{thm:main lambda}, starting with the supercritical regime.
That is, we let $G$ be an arbitrary graph and want to argue that if $p \geq (1+\varepsilon)/\lambda(G)$, then with constant probability $G_p$ contains a component of size $\Omega(\lambda(G))$.
For that we choose a suitable threshold $K$, which will depend on the final probability we want to achieve.
Then, we notice that if $\lambda(G) \geq K\sqrt{\Delta(G)}$, then with probability $1-\exp(-\Omega(K))$ we will get such a large component in $G_p$ by \cref{thm:main large lambda}.
On the other hand, if $\lambda(G) \leq K\sqrt{\Delta(G)}$ then by the Chernoff bound, with very high probability $G_p$ will contain a vertex with degree at least $\Delta(G)/\lambda(G) \geq \lambda(G)/K^2$, which immediately gives us a large connected component.

\begin{proof}[Proof of Theorem~\ref{thm:main lambda}\ref{it:lambda supercritical no assumptions}]
    Fix $\varepsilon, \delta > 0$, let $G$ be a graph and set $p = (1+\varepsilon)/\lambda(G)$.
    Let $\gamma', c$ be the parameters from \cref{thm:main large lambda} applied with $\varepsilon$ and choose $K$ sufficiently large such that $\delta \geq \max \{ \exp(-K/8), \exp(-cK)\}$.
    % \yuval{GPT doesn't like this argument and claims we need to redefine $\delta$ and add a short argument in some case. Don't 100\% understand what it's talking about so recording this here for now} 
    Let moreover $\gamma = \min\{ \gamma', K^{-3} \}$.
    Note that we can assume that $\lambda(G) \geq K^3$, as otherwise $\gamma\lambda(G) \leq 1$ and $G_p$ trivially contains a component of size at least one.

    Now, if $\lambda(G) \geq K\sqrt{\Delta(G)}$, then by \cref{thm:main large lambda} with probability $1-\exp(-cK) \geq 1 - \delta$ there is a component of size at least $\gamma'\lambda(G) \geq \gamma\lambda(G)$ in $G_p$.
    On the other hand, if $\lambda(G) \leq K\sqrt{\Delta(G)}$ then we consider a vertex $v \in V(G)$ with $d_G(v) = \Delta(G) \geq \lambda(G)^2/K^2$.
    By the Chernoff bound (see e.g.\ \cite[Theorem 2.1]{JLR}) we immediately get that%\yuval{Should the first term below be $\Pr\bigl[d_{G_p}(v) < \gamma'\lambda(G)\bigr]$, i.e. with $\gamma'$ rather than $\gamma$?}
    \[
        \Pr\bigl[d_{G_p}(v) < \gamma\lambda(G)\bigr] \leq \exp\left(-\frac{\lambda(G)}{8K^2}\right) \leq \exp(-K/8) \leq \delta,
    \]
    which in particular implies that, with probability at least $1-\delta$, $G_p$ contains a component of size at least $\gamma\lambda(G)$.
\end{proof}

For the subcritical phase of \cref{thm:main lambda}, instead of looking at the component of each fixed vertex $v \in V(G)$, we make the following more global argument.
We first show that if $p \leq (1-\varepsilon)/\lambda(G)$, then the expected number of paths in $G_p$ is $O(\ab{G})$.
Next, we notice that if $G_p$ had a component of size $C\sqrt{\ab{G}}$, then it would immediately give us $C^2\ab{G}/2$ paths, one for each pair of vertices in this large component.
Hence we can bound the probability that $G_p$ contains a large component using Markov's inequality.
We remark that a similar argument was already used in \cite{chunghorn}.

\begin{proof}[Proof of Theorem~\ref{thm:main lambda}\ref{it:lambda subcritical no assumptions}]
    Fix $\varepsilon, \delta > 0$ and let $L = \sqrt{2/(\delta\varepsilon)}$.
    Moreover, let $p \leq (1-\varepsilon)/\lambda(G)$ and let $N$ be the random variable counting the number of distinct paths in $G_p$, where we count vertices as paths of length zero.
    To prove the statement, we want to combine suitable lower and upper bounds on $\mathbb{E}[N]$.

    For the former, since any pair $x, y$ of (not necessarily distinct) vertices lying in the same component of $G_p$ defines at least one distinct path, we get
    \[
        \mathbb{E}[N] \geq \Pr[G_p \text{ has a component of order at least } L\sqrt{\ab{G}}] \cdot L^2\ab G/2.
    \]
    On the other hand, to get an upper bound on $\mathbb{E}[N]$, we notice that for any $k \geq 0$ the number of paths of length $k$ in $G$ is at most $\mathds{1}^TA_G^k\mathds{1}$, the number of walks of length $k$, where $\mathds{1}$ is the all-ones vector.
    Furthermore, by the Cauchy--Schwarz inequality we have $\mathds{1}^TA_G^k\mathds{1} \leq \lVert\mathds{1}\rVert_2\lVert A_G^k\mathds{1}\rVert_2 \leq \lambda(G)^k \ab{G}$.
    Since a path of length $k$ in $G$ survives into $G_p$ with probability $p^k$, we get that
    \[
        \mathbb{E}[N] \leq \sum_{k \geq 0} p^k \cdot \lambda(G)^k \ab{G}\leq \ab{G} \sum_{k \geq 0} (1-\varepsilon)^k = \frac{\ab{G}}{\varepsilon}.
    \]
    By combining the lower and upper bounds and rearranging the terms, we get
    \[
        \Pr[G_p \text{ has a component of order at least } L\sqrt{\ab{G}}] \leq \frac{2}{L^2\varepsilon} = \delta,
    \]
    as desired.
\end{proof}

\section{Concluding remarks}\label{section:concluding remarks}
In this paper, we determined that, under the definition explained in the introduction, $1/\lambda(G)$ is the critical probability for all finite graphs $G$.
In particular, we show that for any finite graph $G$, if $p \geq (1+\varepsilon)/\lambda(G)$, then with probability arbitrarily close to $1$ the graph $G_p$ contains a component of size $\Omega(\lambda(G))$.
As the examples of $K_{s,t}$ and a disjoint union of cliques illustrate, the statement is the best one can hope to get for an arbitrary graph $G$.

A natural question one can still ask is under what additional assumptions on the graph $G$ we get a component of size $\Omega(\ab{G})$ in $G_p$.
This question was addressed in the work of Chung, Horn and Lu~\cite{chunghorn}, yet the conditions they put on the graph $G$ are rather restrictive.
Moreover, the way they measure the size of a component is not by the number of vertices in it, but by a function of the degree sequence, which makes it hard to translate their statements to our setting.

\begin{question}\label{question:omega n supercritical}
    What natural conditions on $G$ imply that if $p \geq (1+\varepsilon)/\lambda(G)$ then with constant probability $G_p$ contains a component of size $\Omega(\ab{G})$?
\end{question}

The main difficulty in finding such a condition is that $\lambda(G)$ can be governed by a very small part of the graph.
In particular, $G$ might contain an induced subgraph $H$ on $(1-o(1))\ab{G}$ vertices such that $\lambda(H) \ll \lambda(G)$, and in particular $H_p$ will behave subcritically even if $p = (1+\varepsilon)/\lambda(G)$.
Thus, simply assuming some notion of ``robust connectivity" would probably not be sufficient, as it will not see such a ``localization of the spectral radius''.

Turning to the subcritical regime, \cref{thm:main large lambda} shows that if $p \leq (1-\varepsilon)/\lambda(G)$ then a.a.s.\ the largest component of $G_p$ will be of size at most $O(\sqrt{\ab{G} \Delta(G)} \log {\ab{G}}/\lambda(G))$.
By taking $G = K_{s, t}$ one can see that this bound is optimal if, roughly speaking, $\lambda(G) \gg \sqrt{\ab{G}}$.
For example, if $G = K_{s, s^3}$, for which we have $\Delta(G) = s^3$ and $\lambda(G) = s^2$, then one can show that for any fixed $\varepsilon$ as $s \to \infty$ and $p = (1-\varepsilon)/\lambda(G)$ a.a.s.\ $G_p$ will contain a connected component of size $\Omega(s\log s)$.\footnote{This is because there will be $\Omega(\log s)$ vertices on the smaller side lying in the same component of $G_p$, each having roughly $s$ neighbours in the larger side.}

However, the bound of $O(\sqrt{\ab{G} \Delta(G)} \log {\ab{G}}/\lambda(G))$ is not tight in all regimes of $\Delta(G)$ and $\lambda(G)$.
\cref{thm:main lambda} shows that if $\lambda(G) \approx \sqrt{\Delta(G)}$, then the $\log {\ab{G}}$-factor is not necessary.
If $\lambda(G)$ is a constant and $\ab{G} \to \infty$, then a similar argument to the one used to prove \cref{thm:main lambda}, but counting paths of length at least $\Omega(\log {\ab{G}})$, shows that if $p \leq (1-\varepsilon)/\lambda(G)$ then a.a.s.\ the largest component of $G_p$ will have size at most $\ab{G}^{1/2-\delta}$, for some $\delta > 0$ depending on $\varepsilon$ and $\lambda(G)$.\footnote{On the other hand, for any $\delta'> 0$ if we take $G$ to be a $d$-ary tree, for a sufficiently large constant $d$, then $G_p$ is likely to contain a component of size $\ab{G}^{1/2 -\delta'}$. This follows from the fact that the spectral radius of the $d$-ary tree approaches $2\sqrt{d}$ as the number of layers goes to infinity.}
% \yuval{I don't understand this sentence or the footnote\textbf{MC: } This is a bit underexplained but I think with careful parsing it can also be understood. Im okay with writing more here or leaving it as is. \textbf{YW} I have reread it and think it's OK}
Finally, if $\lambda(G)=\Delta(G)$ and $G$ is connected, then $G$ is $\lambda(G)$-regular, and in particular we get a $O(\log {\ab{G}})$ bound on the size of the largest connected component in $G_p$.

It would be interesting to know the correct upper bound for all regimes of $\Delta(G)$ and $\lambda(G)$.
\begin{question}\label{question:tight subcritical}
    For a graph with given $\Delta(G)$ and $\lambda(G)$, what is the tight upper bound for the typical size of the largest component of $G_p$, when $p \leq (1-\varepsilon)/\lambda(G)$?
\end{question}

An interesting special case of \cref{question:tight subcritical} would be to consider graphs $G$ which are ``close to regular'', in the sense that $\Delta(G) \leq (1+\delta)\lambda(G)$, for some small $\delta$, possibly being a function of $\ab{G}$. 

Finally, it would be interesting to say more about the structure of the large components in the supercritical regime. In the binomial random graph $G(n,p)$, it is well-known that when $p \geq (1+\varepsilon)/n$, the giant component not only has $\Omega(n)$ vertices, but in fact has a path and a cycle of length $\Omega(n)$ a.a.s. The technique of Krivelevich and Sudakov \cite{MR3085765}, which helped inspire our work, implies that if $G$ is any $d$-regular graph and $p \geq (1+\varepsilon)/d$, then a.a.s.\ $G_p$ has a path of length $\Omega(d)$, and Krivelevich--Samotij \cite{MR3177525} showed that under the same assumptions one obtains a cycle of length $\Omega(d)$. We reiterate a conjecture of Krivelevich and Samotij \cite{MR3177525} (see also \cite{2308.10267}), stating that such a result should still be true if we only assume that the average degree is $d$.
% \yuval{Can one also get a cycle from a cheap sprinkling argument? In $G_{n,p}$ yes but I'm not sure how to do it in $G_p$ in general \textbf{MC: } My guess is yes but only more or less cheap. At $2+eps/d$ it comes easily from the dfs-tree stuff but below i think you need to work a little. It should be easy to show that components of size $\Omega(d)$ usually induce a subgraph of $G$ with average degree $d$. Then one can sprinkle in this unless it is somehow weirdly star shaped or sth like that but im sure there are many ways to salvage that} It would be very interesting to extend such a statement to graphs of average degree $d$.
\begin{conjecture}[\cite{MR3177525,2308.10267}]\label{conj:long cycle}
For every $\varepsilon>0$ there exist $d_0,\gamma>0$ such that the following holds. If $G$ is a graph of average degree $d\geq d_0$ and $p\geq (1+\varepsilon)/d$, then $G_p$ contains a cycle of length at least $\gamma d$ a.a.s.\ as $\ab G \to \infty$.
% \yuval{this previously said $\ab G \to \infty$ but GPT convinced me we probably need $d\to\infty$}
% \yuval{Does this conjecture already appear somewhere? The Krivelevich--Samotij paper has something similar}
\end{conjecture}
We remark that in \cite{2607.02483}, it is proved that if $p \geq C/d$, then $G_p$ a.a.s.\ contains a cycle of length at least $(1-\gamma)d$, where $\gamma \to 0$ as $C \to \infty$. It is possible that by combining the techniques of the present paper with those of \cite{2607.02483}, one could resolve \cref{conj:long cycle}.

\paragraph{Acknowledgments:} We thank Alp M\"uyesser for bringing this problem to our attention and for very helpful discussions in the early phases of this project.

\paragraph{Statement of AI use:} ChatGPT 5.5 suggested the proof of \cref{lemma:weight_subcritical}, but all of the other ideas in the paper, as well as all of the writing, are due entirely to the authors. ChatGPT 5.6 was used to proofread this paper. 

\bibliographystyle{yuval}
{{\bibliography{lib}}}
\end{document}